\newtheorem{thm}{Theorem}[section]
\theoremstyle{definition}
\theoremstyle{remark}
\newtheorem{remark}[thm]{Remark}
\newtheorem{theorem}{Theorem}
\newtheorem{lemma}{Lemma}
\newtheorem{proposition}{Proposition}
\newtheorem{definition}{Definition}
\newtheorem{corollary}{Corollary}
\newtheorem{example}{Example}
\newcommand{\RR}{\mathbb R}
\numberwithin{equation}{section}
\begin{document}
\title[Weak phase retrieval ]{Classifying weak phase retrieval }
\author{P. G. Casazza}
\address{Department of Mathematics, University of Missouri, Columbia, USA.}
\email{casazzapeter40@gmail.com}
\author{F. Akrami}
\address{Department of Mathematics, University of Maragheh, Maragheh, Iran.}
\email{fateh.akrami@gmail.com}
\dedicatory{}

\thanks{The first author was supported by NSF DMS 1609760}

\dedicatory{}
\subjclass[2010]{42C15, 42C40.}

\keywords{Real Hilbert frames, Full spark, Phase retrieval, Weak phase retrieval.}

\begin{abstract}
We will give several surprising equivalences and consequences of weak phase retrieval. These results give a complete understanding
of the difference between weak phase retrieval and phase retrieval.
We also answer two longstanding open problems on weak phase retrieval:  (1) We show that the families of weak phase retrievable frames $\{x_{i}\}_{i=1}^{m}$ in $\mathbb{R}^n$ are not dense in the family of $m$-element sets of
vectors in $\mathbb{R}^n$ for  all $m\ge 2n-2$; (2) We show that any frame $\{x_i\}_{i=1}^{2n-2}$ 
containing one or more canonical basis vectors in $\mathbb{R}^n$ cannot do weak phase retrieval.
We provide numerous examples to show that the obtained results are best possible.
\end{abstract}

\subjclass[2010]{42C15, 42C40.}

\maketitle

\section{Introduction}\label{s:intro}
The concept of frames in a separable Hilbert space was originally introduced by Duffin and Schaeffer in the context of non-harmonic Fourier series \cite{DS1952}.
Frames are a more flexible tool than bases because of  the redundancy property that make them more applicable than bases.
 Phase retrieval is an old problem of recovering a signal from the absolute value of linear measurement coefficients called intensity measurements.
 Phase retrieval and norm retrieval have become very active areas of research in applied mathematics, computer science, engineering, and more today. Phase retrieval has been defined for both vectors and subspaces (projections) in all separable Hilbert spaces, (e.g., \cite{CCJW14}, \cite{BCE2006}, \cite{BCCHT18}, \cite{ACCHTTX17}, \cite{CCD16}, \cite{CCPW13} and \cite{CGJT17}).
\par
The concept of weak phase retrieval weakened the notion of phase retrieval and it has been first defined for vectors in (\cite{ACNT2015} and \cite{ACGJT2016}). 
 The rest of the paper is organized as follows: In Section 2, we give the basic
definitions and certain preliminary results to be used in the paper. Weak
phase retrieval by vectors is introduced in section 3. 
In section 4 we show that any family of vectors $\{x_i\}_{i=1}^{2n-2}$ doing weak phase retrieval cannot contain a unit vector. 
In section 5, we show that the weak phase retrievable frames are not dense in all frames. And in section 6
we give several surprising equivalences and consequences of weak phase retrieval. These results give a complete understanding
of the difference between weak phase retrieval and phase retrieval.

\section{preliminaries} \label{s:pre}
First we give the background material needed for the paper. Let $\mathbb{H}$ be a finite or infinite dimensional real Hilbert space and B$(\mathbb{H})$ the class of all bounded linear operators defined on $\mathbb{H}$. The natural numbers and real numbers are denoted by $``\mathbb{N}"$ and $``\mathbb{R}"$, respectively. We  use $[m]$ instead of the set $\{1,2,3,\dots,m \}$ and use $[\{x_i\}_{i \in I}]$ instead of $span\{x_i\}_{i \in I}$, where $I$ is a finite or countable subset of $\mathbb{N}$. We denote by $\mathbb{R}^n$ a $n$ dimensional real  Hilbert space.
 We start with the definition of a real Hilbert space frame.
\begin{definition} \label{D:frame}
 A family of vectors $\{x_i\}_{i\in I}$ in a finite or infinite dimensional separable real Hilbert space $\mathbb {H}$ is a \textbf{frame} if there are constants $0<A \leq B< \infty $ so that
$$ A\|x\|^2 \leq \sum_{i\in I}{|\langle x,x_i \rangle|^2}\leq B\|x\|^2, \quad \mbox{for all} \quad f\in \mathbb{H}.$$
The constants $A$  and $B$ are called the lower and upper frame bounds for $\{x_i\}_{i\in I}$, respectively. If an upper frame bound exists, then $\{x_i\}_{i\in I}$ is called a {\bf B-Bessel seqiemce} or simply
{\bf Bessel} when the constant is implicit. If $A=B$, it is called an {\bf A-tight frame} and
 in case $ A=B=1$, it is called a {\bf Parseval~frame}.  The values $\{\langle x,x_i \rangle\}_{i=1}^{\infty}$ are called the frame coefficients of the vector $x \in \mathbb{H}$.
\end{definition}

It is immediate that a frame must span the space.
We will need to work with Riesz sequences.
\begin{definition} \label{D:Riesz sequence}
A family $X = \{x_i\}_{i \in I}$ in a finite or infinite dimensional real Hilbert space $\mathbb{H}$
is a \textbf{Riesz sequence} if there are constants $0<A \leq B< \infty $ satisfying
$$A\sum_{i \in I}|c_i|^2 \leq \|\sum_{i\in I}{c_i x_i}\|^2\leq B\sum_{i \in I}|c_i|^2$$
 for all sequences of scalars $\{c_i\}_{i \in I}$.
If it is complete in $\mathbb{H}$, we call $X$ a \textbf{Riesz basis}.
\end{definition}
For an introduction to frame theory we recommend \cite{CK2013, Ch2003}.\\
Throughout the paper the orthogonal projection or simply projection will be a self-adjoint positive projection and $\{e_i\}_{i=1}^{\infty}$ will be used to denote the canonical basis for the real space $\mathbb{R}^n$, i.e., a basis for which
$$\langle e_i,e_j \rangle=\delta_{i,j}=
\begin{cases}
1 \quad  if \ i=j,  \\
0 \quad  if \ i\ne j.
\end{cases}$$

\begin{definition}\label{D:phase(norm) retrieval by vectors}
\textrm A family of vectors $\{x_{i}\}_{i\in I}$ in a real Hilbert space $\mathbb {H}$ does \textbf {phase (norm) retrieval} if whenever $ x, y \in \mathbb {H}$, satisfy
$$|\langle x,x_{i}\rangle |=|\langle y,x_{i}\rangle| \quad \mbox{ for all } i\in I,$$
then $x=\pm y$ \, $(\|x\|=\|y\|). $
\end{definition}
Phase retrieval was introduced in reference \cite{BCE2006}. See reference \cite{ACHR2021} for an introduction to norm retrieval.

Note that if $\{x_i\}_{i\in I}$ does phase (norm) retrieval, then so does $\{a_ix_i\}_{i\in I}$ for any $0< a_i< \infty$ for all $i\in I$.
But in the case where $|I|=\infty$, we have to be careful to maintain frame bounds. This always works if $0<\inf_{i\in I}a_i \le sup_{i\in I}a_i < \infty$.
But this is not necessary in general \cite{ACHR2021}.
 The complement property is an essential issue here.

\begin{definition}\label{D:complement pro in infinite case}
\textrm A family of vectors $\{x_{i}\}_{i\in I}$ in a finite or infinite dimensional real Hilbert space $\mathbb{H}$ has the \textbf{complement property}\, \textrm  if for any subset $J\subset I$, \\
$$either\ \  \overline{span}\{x_{i}\}_{i\in J}=\mathbb{H} \textrm \quad or \quad \ \  \overline{span}\{x_{i}\}_{i\in J^c}=\mathbb{H}. $$
\end{definition}
Fundamental to this area is the following for which the finite dimensional case appeared in \cite{CCPW13}.
\begin{theorem}\label{T:p34}
A family of vectors $\{x_{i}\}_{i\in I} $ does phase retrieval in $\mathbb{R}^n$ if and only if it has the complement property.
\end{theorem}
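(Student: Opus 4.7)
My plan is to prove both directions by the standard ``add and subtract'' trick that exploits the linearity hidden behind the absolute values.

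For the ``only if'' direction, I would argue contrapositively. Suppose the complement property fails, so there is a partition $I = J \sqcup J^c$ such that neither $\{x_i\}_{i\in J}$ nor $\{x_i\}_{i\in J^c}$ spans $\mathbb{R}^n$. Then I can pick nonzero $u,v \in \mathbb{R}^n$ with $u \perp x_i$ for all $i \in J$ and $v \perp x_i$ for all $i \in J^c$. Setting $x = u+v$ and $y = u-v$, a direct calculation gives $\langle x, x_i\rangle = \pm\langle y, x_i\rangle$ on each of $J$ and $J^c$, so $|\langle x,x_i\rangle| = |\langle y,x_i\rangle|$ for all $i \in I$. However $x \neq \pm y$ since $u,v$ are both nonzero, contradicting phase retrieval.

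For the ``if'' direction, assume the complement property and suppose $|\langle x,x_i\rangle| = |\langle y,x_i\rangle|$ for every $i \in I$. Split the index set as
\[
J = \{i \in I : \langle x,x_i\rangle = \langle y,x_i\rangle\}, \qquad J^c = \{i \in I : \langle x,x_i\rangle = -\langle y,x_i\rangle\}.
\]
Then $x - y \perp x_i$ for $i \in J$ and $x + y \perp x_i$ for $i \in J^c$. By the complement property, one of the two index sets has $\{x_i\}$ spanning $\mathbb{R}^n$, which forces either $x = y$ or $x = -y$.

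The argument is essentially a finite-dimensional linear-algebra manipulation, so there is no serious obstacle in $\mathbb{R}^n$; the only subtle point is the clean bookkeeping of the partition in the ``if'' direction (every index lies in $J$ or $J^c$ because the sign ambiguity is exactly two-valued over the reals). I would note that this is precisely where real scalars are essential: the complex analogue of the theorem fails because the phase ambiguity is an entire circle rather than just $\{\pm 1\}$, so the partition trick does not work verbatim.
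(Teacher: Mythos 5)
The paper does not actually prove this theorem---it is stated with only a citation (the finite-dimensional case from \cite{CCPW13}, going back to Balan--Casazza--Edidin)---so there is no in-paper argument to compare against; your proof is the standard one and is correct in both directions. The only cosmetic quibble is that your $J^c$ as defined need not literally be the set-theoretic complement of $J$ (an index with $\langle x,x_i\rangle=\langle y,x_i\rangle=0$ lies in both sets), but since $I\setminus J$ is contained in $\{i:\langle x,x_i\rangle=-\langle y,x_i\rangle\}$, applying the complement property to the partition $J$, $I\setminus J$ still yields $x-y=0$ or $x+y=0$, so the argument goes through unchanged.
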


We recall:

\begin{definition}\label{D:full spark}
\textrm A family of vectors $\{x_{i}\}_{i=1}^{m}$ \textrm in $\mathbb{R}^n$ is full spark if for every $I \subset [m] \, \mbox with \, |I|=n$ \textrm, $\{x_{i}\}_{i \in I}$ \textrm
is linearly independent.\end{definition}
\begin{corollary}\label{T:xx11}
\textrm If $\{x_{i}\}_{i=1}^{m} $  \textrm does phase retrieval in $\mathbb {R}^n$, then $m \geq2n-1$. If $m=2n-1$, $\{x_{i}\}_{i=1}^{m} $ does phase retrieval if and only if it is full spark.
\end{corollary}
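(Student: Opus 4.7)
The plan is to deduce both parts directly from Theorem~\ref{T:p34}, which characterizes phase retrieval via the complement property. The key observation throughout is the dimensional obstruction: a set of fewer than $n$ vectors in $\mathbb{R}^n$ can never span $\mathbb{R}^n$.

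For the lower bound $m \geq 2n-1$, I would argue by contradiction. Suppose $m \leq 2n-2$. Then I can partition $[m]$ into two disjoint subsets $I$ and $I^c$ each of size at most $n-1$. Since $|I| \leq n-1 < n$, the family $\{x_i\}_{i \in I}$ cannot span $\mathbb{R}^n$, and likewise $\{x_i\}_{i \in I^c}$ cannot span $\mathbb{R}^n$. This violates the complement property, so by Theorem~\ref{T:p34}, $\{x_i\}_{i=1}^m$ cannot do phase retrieval.

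For the characterization when $m = 2n-1$, I would prove both directions using Theorem~\ref{T:p34}. Assume first that $\{x_i\}_{i=1}^{2n-1}$ is full spark. For any $I \subset [2n-1]$, since $|I| + |I^c| = 2n-1$, one of the two sets has size at least $n$; say $|I| \geq n$. Then $I$ contains a subset of size exactly $n$, which by full spark is linearly independent, hence spans $\mathbb{R}^n$. Therefore $\{x_i\}_{i \in I}$ spans $\mathbb{R}^n$, so the complement property holds and Theorem~\ref{T:p34} yields phase retrieval.

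Conversely, assume $\{x_i\}_{i=1}^{2n-1}$ does phase retrieval but is not full spark. Then there is a subset $I \subset [2n-1]$ with $|I| = n$ such that $\{x_i\}_{i \in I}$ is linearly dependent; in particular $\clspan{x_i : i \in I} \neq \mathbb{R}^n$. Meanwhile $|I^c| = n-1 < n$, so $\clspan{x_i : i \in I^c} \neq \mathbb{R}^n$ either. This contradicts the complement property, hence the family must be full spark. No serious obstacle is expected here; the whole argument is a counting/dimension exercise, with Theorem~\ref{T:p34} supplying the bridge between phase retrieval and spanning properties of complementary index sets.
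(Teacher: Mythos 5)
Your proof is correct and is exactly the intended derivation: the paper gives no explicit proof, presenting the statement as an immediate consequence of Theorem~\ref{T:p34}, and your counting argument with the complement property is precisely that consequence.
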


We rely heavily on a significant result from \cite{CAR2021}. In that theorem they assume $m=2n-2$, but this is clearly not necessary.
\begin{theorem}\label{T52}
If $\{x_i\}_{i=1}^{m}$ does weak phase retrieval in $\RR^n$ then for every $I\subset [m]$,
if $x,y\in \RR^n$, $\|x\|=\|y\|=1$ and $x\perp span\{x_i\}_{i\in I}$ and $y\perp \{x_i\}_{i\in I^c}$ then 
$x+y$ and $x-y$ are disjointly supported.
\end{theorem}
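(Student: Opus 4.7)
The plan is to reduce the claim to an application of weak phase retrieval applied to the specific pair $u = x+y$ and $v = x-y$, and then exploit the equality $\|x\|=\|y\|$ to force disjoint supports.

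First I would observe that for $i \in I$ the orthogonality $x \perp \operatorname{span}\{x_i\}_{i \in I}$ gives $\langle x+y,x_i\rangle = \langle y,x_i\rangle$ and $\langle x-y,x_i\rangle = -\langle y,x_i\rangle$, so $|\langle u,x_i\rangle| = |\langle v,x_i\rangle|$. Symmetrically, for $i \in I^c$ the orthogonality $y \perp \operatorname{span}\{x_i\}_{i \in I^c}$ gives $\langle u,x_i\rangle = \langle v,x_i\rangle = \langle x,x_i\rangle$. Hence $|\langle u,x_i\rangle| = |\langle v,x_i\rangle|$ for every $i \in [m]$. By the weak phase retrieval hypothesis, $u$ and $v$ must have the same weak phase, i.e.\ there is a single sign $\epsilon \in \{\pm 1\}$ with $\operatorname{sgn}(u_j) = \epsilon\,\operatorname{sgn}(v_j)$ at every coordinate $j$ where both $u_j$ and $v_j$ are nonzero. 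In particular, the product $u_j v_j$ has constant sign across $j$ (it is $\geq 0$ in case $\epsilon = 1$ and $\leq 0$ in case $\epsilon = -1$, with zeros at the remaining coordinates).

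Next, using $\|x\|=\|y\|=1$, I would compute the inner product
\[
\langle u,v\rangle \;=\; \langle x+y,x-y\rangle \;=\; \|x\|^2-\|y\|^2 \;=\; 0,
\]
so $\sum_{j=1}^{n} u_j v_j = 0$. Combined with the fact from the previous step that all terms $u_j v_j$ have a common sign, this forces $u_j v_j = 0$ for every $j$. Since $u_j v_j = (x+y)_j(x-y)_j = x_j^2 - y_j^2$, this is exactly the statement that $x+y$ and $x-y$ are disjointly supported.

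I do not anticipate a serious obstacle; the argument is essentially a clean packaging of "weak phase agreement + orthogonality of $u,v$ implies disjoint support." The one subtlety worth double-checking is the degenerate case where some $u_j$ or $v_j$ vanishes: this is harmless because in the definition of same weak phase one only requires sign agreement on coordinates where \emph{both} are nonzero, and coordinates where either is zero automatically contribute $0$ to both $\langle u,v\rangle$ and to the disjoint-support condition. Apart from that bookkeeping, everything follows directly from the hypothesis, so the proof should be quite short.
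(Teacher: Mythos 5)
Your argument is correct. The paper itself does not prove Theorem~\ref{T52}; it imports it from \cite{CAR2021} without proof, so there is no in-paper argument to compare against, but your reduction is exactly the natural one: the orthogonality hypotheses give $|\langle x+y,x_i\rangle|=|\langle x-y,x_i\rangle|$ for every $i$, weak phase retrieval then forces all the products $(x+y)_j(x-y)_j$ to share a common sign, and $\|x\|=\|y\|$ gives $\langle x+y,x-y\rangle=0$, killing every term. Note that your final step is precisely the content of Theorem~\ref{T70} quoted later in the paper (orthogonal vectors with a common nonvanishing coordinate cannot weakly share signs), and your use of $\|x\|=\|y\|$ is exactly the hypothesis whose necessity is illustrated in Remark~\ref{s18}, so your proof is fully consistent with the paper's toolkit.
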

\begin{remark}\label{s18}
The above theorem may fail if $\|x\| \ne \|y\|$. For example, consider the weak phase retrievable frame in $\RR^3$:
\[ \begin{bmatrix}
 1&1&1\\
 -1&1&1\\
 1&-1&1\\
 1&1&-1
 \end{bmatrix}\]
Also, $x=(0,1,-1)$ is perpendicular to rows 1 and 2 and $y=(0,\frac{1}{2},\frac{1}{2})$ is orthogonal to rows 2 and 3.
But $x+y=(0,\frac{3}{2},\frac{1}{2})$ and $x-y=(0,\frac{-1}{2},\frac{-3}{2})$ and these are not disjointly supported.
But if we let them have the same norm we get $x=(0,1,-1)$ and $y=(0,1,1)$ so $x+y=(0,1,0)$ and $x-y=(0,0,1)$ and these are
disjointly supported.
\end{remark}

\section{Weak phase retrieval}\label{s:Weak phase retrieval by vectors and projections }
The notion of ``Weak phase retrieval by vectors'' in $\mathbb{R}^n$ was introduced in \cite{ACNT2015} and was
developed further in \cite{ACGJT2016}. One limitation of current methods used for retrieving
the phase of a signal is computing power. Recall that a generic family of $(2n-1)$-
vectors in $\mathbb{R}^n$ satisfies phaseless reconstruction, however no set of $(2n-2)$-vectors
can (See \cite{ACGJT2016} for details). By generic we are referring to an open dense set in the set
of $(2n-1)$-element frames in $\mathbb{R}^n$. 

\begin{definition}\label{D:Weakly have the same phase}
Two vectors  $x=(a_1,a_2,\dots,a_n)$ and $y=(b_1,b_2,\dots,b_n)$ in $\mathbb{R}^n$
{\bf weakly have the same phase} if there is a $|\theta|=1$ so that
$phase(a_i)=\theta phase(b_i)$ for all $i \in [n]$, for which $a_i \ne 0 \ne b_i$.\\
If $\theta=1$, we say $x$ and $y$ weakly have the same signs and if $\theta=-1$, they weakly have the opposite signs.
\end{definition}
Therefore with above definition the zero vector in $\RR^n$  weakly has the same phase with all vectors in $\RR^n$.
For $x\in \RR$, $sgn(x)=1$ if $x>0$ and $sgn(x)=-1$ if $x<0$.
\begin{definition}\label{D:Weak phase retrieval by vectors}
A family of vectors $\{x_i\}_{i=1}^m$ does {\bf weak phase retrieval} in $\mathbb{R}^n$ if for any $x=(a_1,a_2,\dots,a_n)$ and $y=(b_1,b_2,\dots,b_n)$ in $\mathbb{R}^n$ with $|\langle x,x_i \rangle |=|\langle y,x_i \rangle |$ for all $i \in [m]$, then $x$ and $y$ weakly have the same phase.
\end{definition}
  A fundamental result here is
\begin{proposition}  \label{D:pp}\cite{ACNT2015}
Let $x=(a_1,a_2,\dots,a_n)$ and $y=(b_1,b_2,\dots,b_n)$ in $\mathbb{R}^n$. The following are equivalent:\\
(1) We have $sgn(a_i a_j)=sgn(b_i b_j)$,\: for  all $1 \leq i \ne j \leq n$ \\
(2) Either $x,y$ have weakly the same sign or they have the opposite signs.
\end{proposition}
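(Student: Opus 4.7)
The plan is to prove both implications separately, reducing everything to the set $K = \{i \in [n] : a_i \ne 0 \ne b_i\}$, since Definition \ref{D:Weakly have the same phase} only constrains phases on $K$, and $sgn(a_i a_j)$ is nonzero precisely when $i, j \in K$. Once this reduction is made, the whole statement is a one-line algebraic manipulation of signs; I do not anticipate a real obstacle beyond clarifying how to read (1) at indices where some coordinate vanishes.

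For $(2) \Rightarrow (1)$: suppose there is $\theta \in \{\pm 1\}$ with $sgn(a_i) = \theta\, sgn(b_i)$ for every $i \in K$. For $i \ne j$ in $K$, a direct computation gives
\[ sgn(a_i a_j) = sgn(a_i) sgn(a_j) = \theta^2 \, sgn(b_i) sgn(b_j) = sgn(b_i b_j). \]
If at least one of $a_i, a_j$ is zero, then $sgn(a_i a_j) = 0$, and one checks the same holds on the $b$-side under the natural reading of (1) as an equality of values in $\{-1, 0, 1\}$. These edge cases are routine.

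For $(1) \Rightarrow (2)$, which is the substantive direction, the key step is the rearrangement of the hypothesis on pairs $i, j \in K$. From $sgn(a_i) sgn(a_j) = sgn(b_i) sgn(b_j)$, multiplying both sides by $sgn(a_j) sgn(b_i)$ and using $sgn(a_j)^2 = sgn(b_i)^2 = 1$ yields
\[ sgn(a_i) sgn(b_i) = sgn(a_j) sgn(b_j). \]
Hence the quantity $\theta := sgn(a_i) sgn(b_i)$ is independent of $i \in K$ and lies in $\{\pm 1\}$ (when $|K| \le 1$, $\theta$ can be chosen arbitrarily and the claim is vacuous). This $\theta$ is exactly the scalar in Definition \ref{D:Weakly have the same phase}: $\theta = 1$ gives that $x$ and $y$ weakly have the same sign, while $\theta = -1$ gives weakly opposite signs. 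This completes (2).

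The only mildly delicate point is the interpretation of $sgn$ at zero and the corresponding reading of (1) at indices where a coordinate vanishes; this is why I state the reduction to $K$ at the start. Once that convention is fixed, the argument is just the two displayed identities above.
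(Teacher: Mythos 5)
The paper does not actually prove this proposition; it is quoted from \cite{ACNT2015}, so there is no in-paper argument to compare against. Judged on its own, your core argument is the standard one and is correct: restricting to $K=\{i: a_i\ne 0\ne b_i\}$, the identity $sgn(a_i)sgn(b_i)=sgn(a_j)sgn(b_j)$ obtained by multiplying the hypothesis through by $sgn(a_j)sgn(b_i)$ shows $\theta:=sgn(a_i)sgn(b_i)$ is constant on $K$, which is precisely the $\theta$ required by Definition \ref{D:Weakly have the same phase}; the converse on $K$ is the one-line $\theta^2=1$ computation.

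The one genuine problem is your dismissal of the zero-coordinate cases in $(2)\Rightarrow(1)$ as ``routine'': under the reading of (1) as an equality of values in $\{-1,0,1\}$ with $sgn(0)=0$, that implication is simply \emph{false}, not routine. Take $x=(1,1)$, $y=(1,0)$ in $\RR^2$: these weakly have the same signs (the only index in $K$ is $i=1$, forcing $\theta=1$), yet $sgn(a_1a_2)=1\ne 0=sgn(b_1b_2)$. So ``one checks the same holds on the $b$-side'' does not survive the case where $a_i,a_j\ne 0$ but one of $b_i,b_j$ vanishes. The resolution is not a computation but a reading: the paper defines $sgn$ only on nonzero reals, so (1) must be interpreted as quantified over pairs $i\ne j$ with $a_ia_j\ne 0\ne b_ib_j$, i.e.\ $i,j\in K$ — exactly the reduction you announce at the start. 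You should commit to that interpretation and delete the false claim, rather than asserting the $\{-1,0,1\}$ version goes through; with that fix the proof is complete.
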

It is clear that if $\{x_i\}_{i=1}^m$ does weak phase retrieval  in $\RR^n$, then
$\{c_ix_i\}_{i=1}^m$ does  weak phase retrieval as long as $c_i>0$ for all $i=1,2,\ldots,m$.

The following appears in \cite{ACGJT2016}.
\begin{theorem}
If $X=\{x_i\}_{i=1}^m$ does weak phase retrieval in $\mathbb{R}^n$, then $m \geq 2n-2$.
\end{theorem}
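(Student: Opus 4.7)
My plan is to argue by contradiction. Suppose $\{x_i\}_{i=1}^m$ does weak phase retrieval in $\mathbb{R}^n$ with $m \le 2n-3$. I will produce a partition $[m] = I \sqcup I^c$ and unit vectors $x \perp \mathrm{span}\{x_i\}_{i \in I}$, $y \perp \mathrm{span}\{x_i\}_{i \in I^c}$ for which $x+y$ and $x-y$ are \emph{not} disjointly supported, directly contradicting Theorem~\ref{T52}.

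For the partition, I would choose $I \subseteq [m]$ with $|I| = \max(0, m-n+1)$. The hypothesis $m \le 2n-3$ gives $|I| \le n-2$, and automatically $|I^c| = m-|I| \le n-1$. Setting $V = (\mathrm{span}\{x_i\}_{i \in I})^\perp$ and $W = (\mathrm{span}\{x_i\}_{i \in I^c})^\perp$, we get $\dim V \ge 2$ and $\dim W \ge 1$. Fix a unit vector $y \in W$. If Theorem~\ref{T52} applied to every unit $x \in V$, then for each coordinate $k$
\[
(x+y)_k (x-y)_k \;=\; x_k^2 - y_k^2 \;=\; 0,
\]
so $|x_k| = |y_k|$ for every $k$ and every unit $x \in V$; equivalently, each coordinate function $x \mapsto |x_k|$ would be identically equal to $|y_k|$ on the unit sphere of $V$.

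The contradiction then comes from a short geometric observation. Pick any two-dimensional subspace $V_0 \subseteq V$ with orthonormal basis $v_1, v_2$, and parametrize the unit circle of $V_0$ by $x(\theta) = \cos\theta\, v_1 + \sin\theta\, v_2$. Then $x(\theta)_k = (v_1)_k \cos\theta + (v_2)_k \sin\theta$ is a sinusoid in $\theta$, and its modulus is constant in $\theta$ only if $(v_1)_k = (v_2)_k = 0$. Applying this for every $k$ would force $v_1 = v_2 = 0$, contradicting that they form a basis of $V_0$. Hence some unit $x \in V$ satisfies $|x_k| \ne |y_k|$ at some coordinate, providing the promised violation of Theorem~\ref{T52}.

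The main obstacle is essentially bookkeeping: the inequality $|I| \le n-2$ (rather than the $|I| \le n-1$ that suffices for the complement-property argument giving $m \ge 2n-1$ for ordinary phase retrieval) is precisely what produces the $2n-2$ threshold, because it is exactly what guarantees $\dim V \ge 2$ and thereby makes the ``$|x_k|$ is constant on the unit sphere of $V$'' conclusion impossible. Beyond that, the proof is a one-step application of Theorem~\ref{T52}: it converts equal-modulus measurements into the rigid coordinate identity $x_k^2 = y_k^2$, which no two-parameter family of unit vectors can satisfy uniformly.
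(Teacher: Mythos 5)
Your argument is correct. Note that the paper does not actually prove this statement itself --- it is quoted from the reference \cite{ACGJT2016} --- so there is no in-paper proof to compare against; but your derivation stands on its own. The dimension count is right: with $m\le 2n-3$ you can always split $[m]$ into $I$ with $|I|\le n-2$ and $I^c$ with $|I^c|\le n-1$, so $\dim V\ge 2$ and $\dim W\ge 1$; the identity $(x+y)_k(x-y)_k=x_k^2-y_k^2$ correctly converts the disjoint-support conclusion of Theorem \ref{T52} into $|x_k|=|y_k|$ for every $k$ and every unit $x\in V$; and the observation that a sinusoid $(v_1)_k\cos\theta+(v_2)_k\sin\theta$ has constant modulus only when it vanishes identically kills this for a two-dimensional $V$. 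There is no circularity in invoking Theorem \ref{T52}, since that result follows from the definition of weak phase retrieval together with Theorem \ref{T70} without any assumption on $m$. The classical proof in \cite{ACGJT2016} uses the same pigeonhole on dimensions (the extra degree of freedom $\dim V\ge 2$ is exactly the crux there too) but argues directly that one can choose $x\in V$ so that $x+y$ and $x-y$ fail to weakly have the same or opposite signs; your route through the disjoint-support theorem plus the unit-circle parametrization is a clean repackaging of that step and is, if anything, slightly more streamlined. One cosmetic point: when $m\le n-2$ your set $I$ is empty, so you are applying Theorem \ref{T52} with $\mathrm{span}\{x_i\}_{i\in\emptyset}=\{0\}$; this is harmless, but worth a sentence acknowledging the degenerate case.
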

Finally, we have:
\begin{theorem}\label{D:fa66}\cite{ACGJT2016}
 If a frame $X=\{x_i\}_{i=1}^{2n-2}$ does weak phase retrieval in $\mathbb{R}^n$, then $X$ is a full spark frame.
\end{theorem}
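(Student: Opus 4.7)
The plan is to argue by contradiction using Theorem~\ref{T52}. Suppose $X$ is not full spark; then there exists $I\subset [2n-2]$ with $|I|=n$ such that $\{x_i\}_{i\in I}$ is linearly dependent. Consequently $\dim\,\mathrm{span}\{x_i\}_{i\in I}\le n-1$, so its orthogonal complement contains a unit vector $x$. At the same time $|I^{c}|=n-2$, hence $\dim\,\mathrm{span}\{x_i\}_{i\in I^{c}}\le n-2$, so the orthogonal complement $V$ of $\mathrm{span}\{x_i\}_{i\in I^{c}}$ has dimension at least $2$. Extracting this two-dimensional slack is the heart of the argument.

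For every unit vector $y\in V$, Theorem~\ref{T52} forces $x+y$ and $x-y$ to be disjointly supported, which coordinatewise says $(x_k+y_k)(x_k-y_k)=0$, i.e.\ $y_k^{2}=x_k^{2}$ for every $k\in [n]$. Thus the function $y\mapsto y_k^{2}$ must be constant on the unit sphere of $V$ for each coordinate $k$.

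To finish, I would choose an orthonormal pair $u_1,u_2$ spanning a two-dimensional subspace of $V$, parametrize the unit sphere there by $y(\theta)=\cos\theta\, u_1+\sin\theta\, u_2$, and expand
\[
y(\theta)_k^{2}=\tfrac{1}{2}\bigl((u_1)_k^{2}+(u_2)_k^{2}\bigr)+\tfrac{1}{2}\bigl((u_1)_k^{2}-(u_2)_k^{2}\bigr)\cos 2\theta+(u_1)_k (u_2)_k\sin 2\theta.
\]
Independence from $\theta$ forces the two trigonometric coefficients to vanish, giving $(u_1)_k^{2}=(u_2)_k^{2}$ and $(u_1)_k(u_2)_k=0$ for every $k$, whence $(u_1)_k=(u_2)_k=0$ for every $k$. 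This contradicts $\|u_1\|=1$, and the assumption that some $n$-subset of $X$ is dependent collapses.

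The main obstacle is conceptual rather than computational: one must recognize that the hypothesis $m=2n-2$ is precisely what makes the orthogonal complement of $\{x_i\}_{i\in I^{c}}$ at least two-dimensional, and that this extra degree of freedom is exactly what converts the disjoint-support condition of Theorem~\ref{T52} into a contradiction. A one-parameter family of admissible $y$'s would not suffice, since $y_k^{2}=x_k^{2}$ can easily be arranged on a single unit vector. Once this observation is in place, the remainder is a routine trigonometric calculation.
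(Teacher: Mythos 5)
Your argument is correct, but note that the paper does not actually prove Theorem~\ref{D:fa66}; it is quoted from \cite{ACGJT2016}, so there is no in-paper proof to match against. Your route is a clean, self-contained derivation from Theorem~\ref{T52}: the count $m=2n-2$ gives a unit vector $x\perp\mathrm{span}\{x_i\}_{i\in I}$ (from the dependent $n$-set) and an at least two-dimensional space $V\perp\mathrm{span}\{x_i\}_{i\in I^c}$ (since $|I^c|=n-2$); disjoint supportedness of $x\pm y$ forces $y_k^2=x_k^2$ for every unit $y\in V$, and your trigonometric parametrization (or simply testing $y=u_1$, $y=u_2$, $y=(u_1+u_2)/\sqrt{2}$, which yields $(u_1)_k(u_2)_k=0$ and $(u_1)_k^2=(u_2)_k^2$ directly) kills the two-dimensional freedom. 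The one point worth making explicit is that this is not circular: Theorem~\ref{T52} is not a consequence of full spark but follows from the definition of weak phase retrieval together with Theorem~\ref{T70}, since $\langle x+y,x-y\rangle=\|x\|^2-\|y\|^2=0$ when $\|x\|=\|y\|$. By contrast, the original proof in \cite{ACGJT2016} works directly from the sign conditions of Proposition~\ref{D:pp} rather than through the disjoint-support lemma; your version buys a shorter argument at the cost of importing the later result \ref{T52}. Either way, the proof stands.
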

Clearly the converse of above theorem is not hold, for example $\{(1,0),(0,1)\}$ is full spark frame that fails weak phase retrieval in $\RR^2$.\\

If $\{x_i\}_{i\in I}$ does phase retrieval and $R$ is an invertible operator on the space then $\{Rx_i\}_{i\in I}$ does phase retrieval. 
This follows easily since $|\langle x,Rx_i\rangle|=|\langle y,Rx_i\rangle|$ implies $|\langle R^*x,x_i\rangle|=|\langle R^*y,x_i\rangle|$,
and so $R^*x=\theta R^*y$ for $|\theta|=1$. Since $R$ is invertible, $x=\theta y$.
This result fails badly for
weak phase retrieval. For example, let $e_1=(1,0),\ e_2=(0,1),\ x_1=(\frac{1}{\sqrt{2}},\frac{1}{\sqrt{2}},\ x_2=(\frac{1}{\sqrt{2}},\frac{-1}{\sqrt{2}})$ in $\RR^2$.
Then $\{e_1,e_2\}$ fails weak phase retrieval, $\{x_1,x_2\}$ does weak phase retrieval and $Ue_i=x_i$ is a unitary operator.

\section{Frames Containing Unit Vectors}

\begin{theorem}\label{A1}
Any frame $\{x_i\}_{i=1}^{2n-2}$ whith one or more canonical basis vectors in $\mathbb{R}^n$ cannot do weak phase retrieval.
\end{theorem}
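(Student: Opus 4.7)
The plan is to derive a contradiction from Theorem \ref{T52} combined with the full-spark conclusion of Theorem \ref{D:fa66}. Without loss of generality, assume $x_1 = e_j$ is a canonical basis vector. Suppose for contradiction that $\{x_i\}_{i=1}^{2n-2}$ does weak phase retrieval; then by Theorem \ref{D:fa66} it is full spark.

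Next I would choose a partition that isolates the canonical basis vector on one side. Take $I=\{1,2,\ldots,n-1\}$, so that $|I|=|I^{c}|=n-1$ and $1\in I$. By full spark, both $\mathrm{span}\{x_i\}_{i\in I}$ and $\mathrm{span}\{x_i\}_{i\in I^{c}}$ are $(n-1)$-dimensional, so each of their orthogonal complements is a line. Pick unit vectors $x\perp \mathrm{span}\{x_i\}_{i\in I}$ and $y\perp \mathrm{span}\{x_i\}_{i\in I^{c}}$; these exist and are unique up to sign.

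The key structural observation is a contrast in the $j$-th coordinates of $x$ and $y$. Because $e_j=x_1\in \mathrm{span}\{x_i\}_{i\in I}$, we immediately get $\langle x,e_j\rangle=0$, so $x_j=0$. On the other hand, the set $\{x_1\}\cup\{x_i\}_{i\in I^{c}}=\{x_i\}_{i\in\{1\}\cup I^{c}}$ consists of exactly $n$ vectors and is linearly independent by full spark, hence spans $\mathbb{R}^n$. If $y_j=\langle y,e_j\rangle=\langle y,x_1\rangle$ were $0$, then $y$ would be orthogonal to all of $\{x_1\}\cup\{x_i\}_{i\in I^{c}}$, forcing $y=0$ and contradicting $\|y\|=1$. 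Therefore $y_j\neq 0$.

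Combining these, $(x+y)_j=y_j\neq 0$ and $(x-y)_j=-y_j\neq 0$, so the $j$-th coordinate lies in the support of \emph{both} $x+y$ and $x-y$. Hence $x+y$ and $x-y$ are not disjointly supported, contradicting Theorem \ref{T52}. I expect no serious obstacle: the only subtle point is verifying that $y_j\neq 0$, but this is exactly where full spark, and therefore Theorem \ref{D:fa66}, does the work for us.
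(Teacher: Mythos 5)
Your proof is correct and uses the same ingredients as the paper's own argument --- the same partition $I=\{1,\ldots,n-1\}$ isolating the basis vector, Theorem \ref{T52}, and the full spark property from Theorem \ref{D:fa66} --- so it is essentially the same proof. The only difference is organizational: the paper first uses Theorem \ref{T52} to force the $j$-th coordinate of $y$ to vanish and then builds an explicit linear dependence (via a projection onto $\mathrm{span}\{e_i\}_{i\ne j}$) to contradict full spark, whereas you use full spark to conclude $y_j\ne 0$ and contradict the disjoint-support conclusion of Theorem \ref{T52} directly, which is a slightly cleaner arrangement of the same contradiction.
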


\begin{proof} We proceed by way of contradiction.
Recall that $\{x_i\}_{i=1}^{2n-2}$ must be full spark. Let $\{e_i\}_{i=1}^n$ be the canonical orthonormal basis
of $\RR^n$. 
Assume $I\subset \{1,2,\ldots,2n-2\}$ with $|I|=n-1$ and assume
$x=(a_1,a_2,\ldots,a_n)$, $y=(b_1,b_2,\ldots ,b_n)$ with $\|x\|=\|y\|=1$ and $x\perp X=span\{x_i\}_{i\in I}$ and
$y\perp span\{x_i\}_{i=n}^{2n-2}$. 
After reindexing $\{e_i\}_{i=1}^n$ and $\{x_i\}_{i=1}^{2n-2}\}$, we assume $x_1=e_1$, $I=\{1,2,\ldots,n-1$ and
$I^c=\{n,n+1,\ldots,2n-2\}$. Since $\langle x,x_1\rangle = a_1=0$, by Theorem \ref{T52}, $b_1=0$.
 Let $P$ be the projection on $span\{e_i\}_{i=2}^{n}$. So $\{Px_i\}_{i=n}^{2n-2}$ is
$(n-1)$-vectors in an $(n-1)$-dimensional space and $y$ is orthogonal to all these vectors. So there exist
$\{c_i\}_{i=n}^{2n-2}$ not all zero so that 
\[ \sum_{i=n}^{2n-2}c_iPx_i=0\mbox{ and so }\sum_{i=n}^{2n-1}c_ix_i(1)x_1-\sum_{i=n}^{2n-2}c_ix_i=0.\]
That is, our vectors are not full spark, a contradiction.
\end{proof}

\begin{remark}
The fact that there are $(2n-2)$ vectors in the theorem is critical. For example, $e_1,e_2,e_1+e_2$ is full spark
in $\RR^2$, so it does phase retrieval - and hence weak phase retrieval - despite the fact that it contains both
basis vectors.
\end{remark}
The converse of Theorem \ref{A1} is not true in general.
\begin{example}
Consider the full spark frame
$X=\{(1,2,3),(0,1,0),(0,-2,3),(1,-2,-3)\}$ in $\RR^3$. Every set of its two same coordinates,
\[\{(1,2),(0,1),(0,-2),(1,-2)\},\ \{(1,3),(0,0),(0,3),(1,-3)\},\mbox{ and }\]
\[\{(2,3),(1,0),(-2,3),(-2,-3)\}\]
do weak phase retrieval in $\RR^2$, but by Theorem  \ref{A1}, $X$ cannot do weak phase retrieval in $\mathbb{R}^3$.
\end{example}

\section{Weak Phase Retrievable Frames are not Dense in all Frames}

If $m\ge 2n-1$ and $\{x_i\}_{i=1}^m$ is full spark then it has complement property and hence does phase retrieval.
Since the full spark frames are dense in all frames, it follows that the frames doing phase retrieval are dense in all
frames with $\ge 2n-1$ vectors.
We will now show that this result fails for weak phase retrievable frames.
The easiest way to get very general frames failing weak phase retrieval is:
\vskip10pt
Choose $x,y\in \RR^n$ so that $x+y,x-y$ do not have the same or opposite signs.
Let $X_1=x^{\perp}$ and $Y_1=y^{\perp}$. Then span$\{X_1,X_2\}=\RR^n$.
Choose $\{x_i\}_{i=1}^{n-1}$ vectors spanning $X_1$ and $\{x_i\}_{i=n}^{2n-2}$ be vectors spanning $X_2$. 
Then $\{x_i\}_{i=1}^{2n-2}$ is a frame for
$\RR^n$ with $x\perp x_i$, for $i=1,2,\ldots,n-1$ and $y\perp x_i$, for all $i=n,n+1,,\ldots,2n-2$. It follows that
\[ |\langle x+y,x_i\rangle| = |\langle x-y,x_i\rangle|,\mbox{ for all }i=1,2,\ldots,n,\]
but $x,y$ do not have the same or opposite signs and so $\{x_i\}_{i=1}^{2n-2}$ fails weak phase retrieval.

\begin{definition}
If $X$ is a subspace of $\RR^n$, we define the {\bf sphere of X} as
\[ S_X=\{x\in X:\|x\|=1\}.\]
\end{definition}

\begin{definition}
If $X,Y$ are subspaces of $\RR^n$, we define the {\bf distance between X and Y} as
\[ d(X,Y)=sup_{x\in S_X}inf_{y\in S_Y}\|x-y\|.\]
\end{definition}

It follows that if $d(X,Y)<\epsilon$ then for any $x\in X$ there is a $z\in S_Y$ so that $\|\frac{x}{\|x\|}-z\|<\epsilon$.
Letting $y=\|x\|z$ we have that $\|y\|=\|x\|$ and $\|x-y\|<\epsilon \|x\|$.

\begin{proposition}\label{P1}
Let $X,Y$ be hyperplanes in $\RR^n$ and unit vectors $x\perp X,\ y\perp Y$. If $d(X,Y)<\epsilon$ then $min\{\|x-y\|,\|x+y\|\}<6\epsilon.$
\end{proposition}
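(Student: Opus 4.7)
The plan is to reduce the statement to an orthogonal decomposition of $y$ along $x$. The key observation is that the hypothesis forces every unit vector of $X$ to be nearly orthogonal to $y$. Indeed, given any $u \in S_X$, the definition $d(X,Y)<\epsilon$ provides some $v \in S_Y$ with $\|u-v\| < \epsilon$; since $v \in Y = y^\perp$, we have $\langle v, y\rangle = 0$, and hence
\[ |\langle u, y\rangle| = |\langle u - v, y\rangle| \leq \|u-v\|\,\|y\| < \epsilon. \]

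Next I would write $y = \alpha x + w$ with $w \perp x$, so that $w \in X$. If $w = 0$ then $y = \pm x$ and we are done, so suppose $w \neq 0$ and apply the previous observation to the unit vector $u := w/\|w\| \in S_X$. Since $\langle u, y\rangle = \langle u, \alpha x + w\rangle = \|w\|$, we conclude $\|w\| < \epsilon$. From $\|y\|=1$ and $\alpha^2 + \|w\|^2 = 1$ we get $\alpha^2 > 1 - \epsilon^2$; we may assume $\epsilon < 1$ since otherwise the inequality $\|x\pm y\|\leq 2 < 6\epsilon$ gives the conclusion trivially.

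Finally, choosing the sign so that $\pm\alpha = |\alpha|$ and expanding,
\[ \min\{\|x-y\|^2,\|x+y\|^2\} = 2 - 2|\alpha| \leq 2\bigl(1 - \sqrt{1-\epsilon^2}\bigr) \leq 2\epsilon^2, \]
where the last inequality uses $1 - \sqrt{1-t} \leq t$ for $t \in [0,1]$. This yields $\min\{\|x-y\|,\|x+y\|\} \leq \sqrt{2}\,\epsilon$, which is well inside the claimed bound $6\epsilon$. The argument has no real obstacle; the only point that requires attention is the asymmetry of the distance (the supremum is taken over $S_X$ and the infimum over $S_Y$), which is why Step 1 is phrased for $u \in S_X$ rather than the other way around.
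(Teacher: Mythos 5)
Your proof is correct, and it takes a genuinely different route from the paper's. The paper writes $x=ay+z$ with $z\in Y$ and then approximates the component $z$ by a vector $w\in X$ of the same norm, estimating $1-a$ and $\|z\|$ separately before expanding $\|x-y\|^2$; this costs a few triangle inequalities and lands at the constant $6$. You instead decompose $y=\alpha x+w$ with $w\perp x$, note that $w\in X$ because $X$ is exactly $x^\perp$ (a hyperplane orthogonal to a unit vector), and kill $\|w\|$ in one line by testing $y$ against the single unit vector $w/\|w\|\in S_X$, using the observation that every $u\in S_X$ satisfies $|\langle u,y\rangle|<\epsilon$. Two things are gained. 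First, your use of the hypothesis respects the asymmetry of $d(X,Y)=\sup_{u\in S_X}\inf_{v\in S_Y}\|u-v\|$: you only ever approximate vectors of $S_X$ by vectors of $S_Y$, whereas the paper's step ``there is some $w\in X$ with $\|w\|=\|z\|$ so that $\|w-z\|<\epsilon$'' applies the approximation to $z\in Y$, i.e.\ in the direction the definition does not literally provide. Second, you obtain the sharper bound $\min\{\|x-y\|,\|x+y\|\}<\sqrt{2}\,\epsilon$, which of course implies the stated $6\epsilon$. The only cosmetic point: after $\|w\|<\epsilon$ you should keep the inequality strict through $2-2|\alpha|<2\bigl(1-\sqrt{1-\epsilon^2}\bigr)\le 2\epsilon^2$ so that the final conclusion is strict, as the proposition asserts; this is immediate from your argument.
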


\begin{proof}
Since span$\{y,Y\}=\RR^n$, $x=ay+z$ for some $z\in Y$. By replacing $y$ by $-y$ if necessary, we may assume $0<a$.
By assumption, there is some $w\in X$ with $\|w\|=\|z\|$ so that $\|w-z\|< \epsilon$. Now
\[a = a\|y\|=\|ay\|=\|x-z\|\ge \|x-w\|-\|w-z\| \ge \|x\|-\epsilon=1-\epsilon.\]
So, $1-a< \epsilon$. Also, $1=\|x\|^2=a^2+\|w\|^2$ implies $a<1$. I.e. $0<1-a<\epsilon$.
\[1=\|x\|^2=\|ay+z\|^2= a^2\|y\|^2+\|z\|^2=a^2+\|z\|^2\ge (1-\epsilon)^2+\|z\|^2.\]
So
\[ \|z\|^2\le 1-(1-\epsilon)^2=2\epsilon-\epsilon^2\le 2\epsilon.\]
Finally,
\begin{align*}
 \|x-y\|^2&=\|(ay+z)-y\|^2\\
 &\le (\|(1-a)y\|+\|z\|)^2\\
 &\le (1-a)^2\|y\|^2+\|z\|^2+2(1-a)\|y\|\|z\|\\
 &< \epsilon^2+2\epsilon+2 \sqrt{2}\epsilon^2\\
 &<6\epsilon.
\end{align*}
\end{proof}

\begin{lemma}\label{L1}
Let $X,Y$ be hyperplanes in $\RR^n$, $\{x_i\}_{i=1}^{n-1}$ be a unit norm basis for $X$ and
$\{y_i\}_{i=1}^{n-1}$ be a unit norm basis for $Y$ with basis bounds B. If
$\sum_{i=1}^{n-1}\|x_i-y_i\|<\epsilon$ then $d(X,Y)<2\epsilon B$.
\end{lemma}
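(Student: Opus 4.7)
The plan is to bound $d(X,Y)$ by constructing, for each $x\in S_X$, an explicit nearby unit vector in $S_Y$ obtained by transferring coordinates across the two matched bases. Since each $y_i$ is close to $x_i$, the ``same coefficients'' map $\sum c_i x_i\mapsto \sum c_i y_i$ from $X$ to $Y$ should be almost an isometry, and its output (after renormalization) should serve as a good approximant to $x$.

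Concretely, I would fix $x\in S_X$ and expand $x=\sum_{i=1}^{n-1}c_ix_i$. The hypothesis ``basis bounds $B$'' supplies a uniform coefficient estimate $|c_i|\le B$ for every $i$ (the natural convention making this work is $B=1/\sqrt{A}$, where $A$ is the lower Riesz bound of $\{x_i\}$, so that $\sum|c_i|^2\le \|x\|^2/A = B^2$ whenever $\|x\|=1$). Setting $\tilde y=\sum_{i=1}^{n-1} c_i y_i\in Y$, the triangle inequality together with $\sum\|x_i-y_i\|<\epsilon$ gives
\[ \|x-\tilde y\|= \Big\|\sum_i c_i(x_i-y_i)\Big\|\le B\sum_i \|x_i-y_i\| < B\epsilon. \]

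To finish, I would normalize to $y=\tilde y/\|\tilde y\|\in S_Y$. This is valid in the non-trivial range $B\epsilon<1$ (outside which the claim is vacuous since $\|x-y\|\le 2$ always). The reverse triangle inequality yields $\big|\|\tilde y\|-1\big|=\big|\|\tilde y\|-\|x\|\big|\le \|x-\tilde y\|<B\epsilon$, so $\|\tilde y-y\|=\big|\|\tilde y\|-1\big|<B\epsilon$. Combining, $\|x-y\|\le\|x-\tilde y\|+\|\tilde y-y\|<2B\epsilon$, and since this bound is uniform in $x\in S_X$, taking the infimum over $S_Y$ and supremum over $S_X$ yields $d(X,Y)<2\epsilon B$. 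The one conceptual point is extracting the coefficient bound $|c_i|\le B$ from ``basis bounds $B$''; once that is in hand the proof is pure triangle inequality plus renormalization, and I see no real obstacle.
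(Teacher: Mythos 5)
Your proposal is correct and follows essentially the same argument as the paper: expand $x=\sum_i c_ix_i$, transfer the coefficients to the basis $\{y_i\}_{i=1}^{n-1}$, bound $\|x-\tilde y\|\le (\sup_i|c_i|)\sum_i\|x_i-y_i\|<B\epsilon$, and then renormalize $\tilde y$ to a unit vector at the cost of a second $B\epsilon$. The only cosmetic differences are that you make explicit the coefficient bound $|c_i|\le B$ and the degenerate range $B\epsilon\ge 1$, both of which the paper leaves implicit.
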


\begin{proof}
Let $0<A\le B < \infty$ be upper and lower basis bounds for the two bases.
Given a unit vector $x=\sum_{i=1}^{n-1}a_ix_i\in X$, let $y=\sum_{i=1}^{n-1}a_iy_i\in Y$.
We have that $sup_{1\le i \le n-1}|a_i|\le B$.
We compute:
\begin{align*}
 \|x-y\|&=\|\sum_{i=1}^{n-1}a_i(x_i-y_i)\|\\
 &\le \sum_{i=1}^{n-1}|a_i|\|x_i-y_i\|\\
 &\le (sup_{1\le i \le n-1}|a_i|)\sum_{i=1}^{n-1}\|x_i-y_i\|\le B\epsilon.
 \end{align*}
 So
 \[\|y\|\ge \|x\|-\|x-y\|\ge 1-B\epsilon.
 \]

\begin{align*}
\left \|x-\frac{y}{\|y\|}\right \|&\le  \|x-y\|+\left \|y-\frac{y}{\|y\|}\right \|\\
&\le B\epsilon+\frac{1}{\|y\|}\|(1-\|y\|)y\|\\
&= B\epsilon+(1-\|y\|)\\
&\le 2B\epsilon.
\end{align*}
It follows that $d(X,Y)<2B\epsilon.$
\end{proof}

\begin{lemma}\label{L3}
Let $\{x_i\}_{i=1}^n$ be a basis for $\RR^n$ with unconditional basis constant $B$ and assume $y_i\in \RR^n$ satisfies
$\sum_{i=1}^n\|x_i-y_i\|<\epsilon.$ Then $\{y_i\}_{i=1}^n$ is a basis for $\RR^n$ which is $1+\epsilon B$-equivalent
to $\{x_i\}_{i=1}^n$ and has unconditional basis constant $B(1+\epsilon B)^2$.
\end{lemma}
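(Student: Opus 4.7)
The strategy is a standard small-perturbation argument: write $\sum a_i y_i = \sum a_i x_i + \sum a_i(y_i-x_i)$ and control the error term via the coefficient estimate that the unconditional basis constant provides. For this to bite I need to know that $|a_k| \le B\|\sum_j a_j x_j\|$ for each $k$ (which follows from $\|x_k\| = 1$ by specializing the unconditional estimate to the sign pattern supported on $\{k\}$); I'll assume the $x_i$ are unit norm, consistent with the standing assumption in Lemma \ref{L1}.

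Fix scalars $(a_i)_{i=1}^n$. Applying the coefficient estimate termwise gives
\begin{align*}
\left\|\sum_i a_i(y_i-x_i)\right\|
 &\le \sum_i |a_i|\,\|x_i-y_i\|\\
 &\le B\,\left\|\sum_j a_j x_j\right\|\sum_i\|x_i-y_i\|\\
 &< \epsilon B\,\left\|\sum_j a_j x_j\right\|.
\end{align*}
By the triangle inequality this yields the equivalence in both directions, which I would take as the definition of $(1+\epsilon B)$-equivalence in the statement. The same estimate shows that the map $T:\sum a_i x_i\mapsto\sum a_i(x_i-y_i)$ has operator norm $<\epsilon B<1$, so $I-T$ is invertible; since $\sum a_i y_i=(I-T)\sum a_i x_i$, no nontrivial coefficient vector can kill $\sum a_i y_i$, and $\{y_i\}$ is a basis.

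For the unconditional constant, I would chain the two facts: given signs $\theta_i\in\{\pm 1\}$,
\begin{align*}
\left\|\sum_i \theta_i a_i y_i\right\|
 &\le (1+\epsilon B)\left\|\sum_i \theta_i a_i x_i\right\|\\
 &\le B(1+\epsilon B)\left\|\sum_j a_j x_j\right\|\\
 &\le B(1+\epsilon B)^2\left\|\sum_j a_j y_j\right\|,
\end{align*}
invoking the upper bound of the equivalence, then the unconditional basis constant of $\{x_i\}$, and finally the reverse direction of the equivalence. This is exactly the claimed bound, and the factor $(1+\epsilon B)^2$ naturally records the cost of passing from $y$'s to $x$'s and back.

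The main obstacle I anticipate is the bookkeeping around the reverse direction of the equivalence: the naive direct estimate produces the lower bound $(1-\epsilon B)\|\sum a_i x_i\|\le\|\sum a_i y_i\|$, whose reciprocal $1/(1-\epsilon B)$ is strictly larger than $1+\epsilon B$. To recover the cleaner constant stated in the lemma I would re-run the perturbation estimate with the roles of $\{x_i\}$ and $\{y_i\}$ swapped (legitimate once $\{y_i\}$ is known to be a basis, since $\sum\|y_i-x_i\|<\epsilon$ is symmetric); this produces the matching upper bound $\|\sum a_j x_j\|\le (1+\epsilon B)\|\sum a_j y_j\|$ needed for the final chaining step. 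All the remaining inequalities are routine and transparent.
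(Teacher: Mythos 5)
Your proposal is correct and follows essentially the same route as the paper: the termwise estimate $\|\sum_i a_i(x_i-y_i)\|\le(\sup_i|a_i|)\sum_i\|x_i-y_i\|\le\epsilon B\|\sum_i a_ix_i\|$ (using the coefficient bound from the unconditional constant and unit norms), followed by the triangle inequality for the equivalence and the same three-step chain $\|\sum_i\theta_ia_iy_i\|\le(1+\epsilon B)\|\sum_i\theta_ia_ix_i\|\le B(1+\epsilon B)\|\sum_i a_ix_i\|\le B(1+\epsilon B)^2\|\sum_i a_iy_i\|$ for the unconditional constant. You are in fact more careful than the paper about the reverse inequality (the paper only derives the lower bound $(1-\epsilon B)\|\sum_i a_ix_i\|$ and then asserts $(1+\epsilon B)$-equivalence); just note that your ``swap the roles'' fix is slightly circular as stated, since the coefficient bound for the $y_i$'s involves their (not yet known to equal $B$) unconditional constant, so the honest constant in the reverse direction is $1/(1-\epsilon B)$ rather than $1+\epsilon B$.
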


\begin{proof}
Fix $\{a_i\}_{i=1}^n$ and compute
\begin{align*}
\|\sum_{i=1}^na_iy_i\|&\le \|\sum_{i=1}^na_ix_i\|+\|\sum_{i=1}^n|a_i|(x_i-y_i)\|\\
&\le \|\sum_{i=1}^na_ix_i\|+(sup_{1\le i \le n}|a_i|)\sum_{i=1}^n\|x_i-y_i\|\\
&\le \|\sum_{i=1}^na_ix_i\|+(sup_{1\le i \le n}|a_i|)\epsilon\\
&\le \|\sum_{i=1}^na_ix_i\|+\epsilon B\|\sum_{i=1}^na_ix_i\|\\
& = (1+\epsilon B)\|\sum_{i=1}^na_ix_i\|.
\end{align*}
Similarly,
\[ \|\sum_{i=1}^n|a_i|y_i\|\ge (1-\epsilon B)\|\sum_{i=1}^na_ix_i\|.\]
So $\{x_i\}_{i=1}^n$ is $(1+\epsilon B)$-equivalent to $\{y_i\}_{i=1}^n$.

For $\epsilon_i = \pm1$,
\begin{align*}
 \|\sum_{i=1}^n\epsilon_ia_iy_i\|&\le (1+\epsilon B)\|\sum_{i=1}^n\epsilon_ia_ix_i\|\\
 &\le B(1+\epsilon B)\|\sum_{i=1}^na_ix_i\|\\
 &\le B(1+\epsilon B)^2\|\sum_{i=1}^na_iy_i\|.
\end{align*}

 and so $\{y_i\}_{i=1}^n$ is a $B(1+\epsilon B)$
unconditional basis.
\end{proof}

\begin{theorem}
The family of m-element weak phase retrieval frames are not dense in the set of m-element frames in $\RR^n$ for all $m\ge 2n-2$.
\end{theorem}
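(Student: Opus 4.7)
The plan is to exhibit a nonempty open ball of $(2n{-}2)$-element frames in $\RR^n$ each of which fails weak phase retrieval, using the construction sketched in the paragraph immediately preceding the theorem; this handles the essential case $m=2n-2$. The key observation is that the obstruction given by Theorem \ref{T52} is stable under small perturbations provided one starts with a sufficiently \emph{robust} witness pair. First I would pick unit vectors $x,y\in\RR^n$ so that $x+y$ and $x-y$ do not weakly have the same or opposite signs, with the witnessing coordinates strictly bounded away from $0$. The cleanest choice is $x=e_1,\ y=e_2$: then $x+y$ and $x-y$ have respective sign patterns $(+,+,0,\ldots,0)$ and $(+,-,0,\ldots,0)$, so by Proposition \ref{D:pp} they fail to have weakly the same (or opposite) signs, and the witnessing coordinates equal $\pm 1$.

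Next, let $\{x_i\}_{i=1}^{n-1}$ be a unit-norm basis of $x^\perp$ and $\{x_i\}_{i=n}^{2n-2}$ a unit-norm basis of $y^\perp$; together they form a frame $\mathcal{F}$ for $\RR^n$. By the construction in the paragraph before the theorem, $\mathcal{F}$ fails weak phase retrieval with witnesses $u=x+y$ and $v=x-y$. Let $B$ bound from above the basis (and unconditional basis) constants of both half-bases. Fix a small $\eps>0$ (to be pinned below) and consider an arbitrary frame $\{z_i\}_{i=1}^{2n-2}$ with $\sum_{i=1}^{2n-2}\|x_i-z_i\|<\eps$.

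For $\eps$ smaller than the lower basis constants, each perturbed half remains linearly independent (in the spirit of Lemma \ref{L3}, applied to $(n{-}1)$-vector bases inside $\RR^n$), so $X_1':=\operatorname{span}\{z_i\}_{i=1}^{n-1}$ and $Y_1':=\operatorname{span}\{z_i\}_{i=n}^{2n-2}$ are hyperplanes. Lemma \ref{L1} yields $d(x^\perp,X_1'),\,d(y^\perp,Y_1')<2B\eps$, and Proposition \ref{P1} then produces unit normals $x'\perp X_1',\ y'\perp Y_1'$ (after sign flips if needed) satisfying $\|x-x'\|,\|y-y'\|<12B\eps$. Consequently every coordinate of $x'\pm y'$ lies within $24B\eps$ of the corresponding coordinate of $x\pm y$. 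Choosing $\eps<1/(100B)$ keeps the first two coordinates of $x'+y'$ strictly positive and those of $x'-y'$ strictly of opposite signs, so by Proposition \ref{D:pp}, $x'+y'$ and $x'-y'$ still fail to weakly have the same (or opposite) signs. Finally, orthogonality gives $|\langle x'+y',z_k\rangle|=|\langle x'-y',z_k\rangle|$ for every $k$, so $\{z_i\}_{i=1}^{2n-2}$ fails weak phase retrieval. The entire $\eps$-ball around $\mathcal{F}$ is thus disjoint from the set of weak phase retrievable frames.

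The main technical obstacle is converting the three qualitative stability results (Lemmas \ref{L1}, \ref{L3} and Proposition \ref{P1}) into a single quantitative inequality: one must verify that the sign-mismatch margin, lower-bounded by the magnitudes of the witnessing coordinates of $x\pm y$ (here equal to $1$), strictly exceeds the coordinatewise perturbation $24B\eps$. Because both quantities become absolute constants once the half-bases have been fixed, a single explicit choice of $\eps$ works uniformly across the whole ball, yielding the non-density statement.
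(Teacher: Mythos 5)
Your argument for the case $m=2n-2$ is correct and is essentially the paper's argument: both proofs build a frame out of bases of two hyperplanes $X=x^{\perp}$ and $Y=y^{\perp}$, observe that the pair $x+y$, $x-y$ gives a violation of weak phase retrieval that is quantitatively bounded away from degeneracy, and then use Lemma \ref{L1} and Proposition \ref{P1} to show that the unit normals $x',y'$ of the perturbed hyperplanes stay coordinatewise close to $x,y$, so the violation survives in an entire $\eps$-ball. The differences are minor and mostly to your advantage: you take $x=e_1$, $y=e_2$ and bases of $e_1^{\perp}$, $e_2^{\perp}$, so the witnessing coordinates are exactly $\pm 1$ and you can close the argument directly from Proposition \ref{D:pp} (an explicit sign mismatch between $x'+y'$ and $x'-y'$, which contradicts the definition of weak phase retrieval outright), whereas the paper starts from a full spark frame containing the canonical basis, must argue via full spark that $\langle x\pm y,e_n\rangle\neq 0$ to extract its $\delta>0$, and then invokes Theorem \ref{T52} (failure of disjoint supports). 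Your explicit constants ($12B\eps$, $24B\eps$, $\eps<1/(100B)$) simply make precise the paper's ``choose $\eps$ small enough.'' The only technical point worth tightening is that Lemma \ref{L1} is stated for \emph{unit norm} bases while the perturbed vectors $z_i$ need not be unit vectors; this is harmless (renormalize, at the cost of a constant) and the paper glosses over the same point.

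The genuine gap is that you dismiss $m>2n-2$ as inessential, while the theorem claims non-density for \emph{all} $m\ge 2n-2$. You should know, though, that the paper's own reduction (``repeat the $(2n-2)$ vectors until we get $m$ vectors'') does not work: a perturbation of a frame with repeated vectors need not have repeated vectors, and once $m\ge 2n-1$ the full spark $m$-element frames are dense and satisfy the complement property, hence do phase retrieval and therefore weak phase retrieval --- exactly as the paper observes in the opening paragraph of Section 5. So every $m$-element frame with $m\ge 2n-1$ has weak phase retrievable frames arbitrarily close to it, the statement is false for $m\ge 2n-1$, and the correct content of the theorem is precisely the $m=2n-2$ case you proved. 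In short: your proof establishes everything that can be established here, but you should either restrict the statement to $m=2n-2$ or explicitly flag that the extension to larger $m$ cannot hold, rather than leaving it as an unaddressed ``essential case.''
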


\begin{proof}
We may assume $m=2n-2$ since for larger m we just repeat the (2n-2) vectors over and over until we get m vectors.
Let $\{e_i\}_{i=1}^n$ be the canonical orthonormal basis for $\RR^n$ and let
$x_i=e_i$ for $i=1,2,\ldots,n$. By \cite{CCPW13}, there is an orthonormal sequence $\{x_i\}_{i=n+1}^{2n-2}$
so that $\{x_i\}_{i=1}^{2n-2}$ is full spark. Let $I=\{1,2,\ldots,n-1\}$.
Let $X=span\{x_i\}_{i=1}^{n-1}$ and $Y=span\{x_i\}_{i=n}^{2n-2}$. Then $x=e_n\perp X$ and there is a 
$\|y\|=1$ with $y\perp Y$. Note that $\langle x-y,e_n\rangle \not= 0 \not= \langle x+y,e_n\rangle$, for
otherwise, $x=\pm y\perp span \{x_i\}_{i\not= n}$, contradicting the fact that the vectors are full spark.  So there is a $j=n$ and a $\delta>0$
so that $|(x+y)(j)|,|(x-y)(j)|\ge \delta.$
We will show that there exists an $0<\epsilon$ so that whenever $\{y_i\}_{i=1}^{2n-2}$ are
vectors in $\RR^n$ satisfying
$ \sum_{i=1}^n\|x_i-y_i\|<\epsilon,$
then $\{y_i\}_{i=1}^n$ fails weak phase retrieval.

 Fix $0<\epsilon$. Assume $\{y_i\}_{i=1}^{2n-2}$ are vectors so that $\sum_{i=1}^{2n-2}\|x_i-y_i\|<\epsilon.$
Choose unit vectors $x'\perp span\{y_i\}_{i\in I},y'\perp span\{y_i\}_{i\in I^c}$. By Proposition \ref{P1} and Lemma \ref{L1},
we may choose $\epsilon$ small enough (and change signs if necessary) so that $\|x-x'\|,\|y-y'\|<\frac{\delta}{4B}$. Hence, since the
unconditional basis constant is $B$,
\begin{eqnarray*}
 |[(x+y)-(x'+y')](j)|&\le |(x-x')j|+|(y-y')(j)|\\&<B\|x-x'\|+B\|y-y'\|\\
&\le 2B\frac{\delta}{4B}=\frac{\delta}{2}.
 \end{eqnarray*}
It follows that
\[ |(x'+y')(j)|\ge |(x+y)(j)|-|[(x+y)-(x'+y')](j)|\ge \delta-\frac{1}{2}\delta=\frac{\delta}{2}.\]
Similarly, $|(x'-y')(j)|> \frac{\delta}{2}$. So $x'+y',x'-y'$ are not disjointly supported and so $\{y_i\}_{i=1}^{2n-2}$ fails weak phase retrieval
by Theorem \ref{T52}.
\end{proof}

\section{Classifying Weak Phase Retrieval}

In this section we will give several surprising equivalences and consequences of weak phase retrieval. These results give a complete understanding
of the difference between weak phase retrieval and phase retrieval.

We need a result from \cite{CAR2021}.

\begin{theorem}\label{T70}
Let $x=(a_1,a_2,\ldots,a_n),\ y=(b_1,b_2,\ldots,b_n)\in \RR^n$. If there exists an $i\in [n]$ so that $a_ib_i\not= 0$ and
$\langle x,y\rangle =0$, then $x,y$ do not weakly have the same signs or opposite signs. 
\end{theorem}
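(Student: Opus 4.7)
The plan is a direct proof by contradiction that exploits the sign condition imposed by weakly having the same or opposite signs to force a strictly positive (or strictly negative) inner product, contradicting orthogonality. I will first reduce the opposite-signs case to the same-signs case by replacing $y$ with $-y$, noting that this preserves both the orthogonality $\langle x,y\rangle=0$ and the hypothesis $a_ib_i\ne 0$ (now becoming $a_i(-b_i)\ne 0$). So it suffices to derive a contradiction assuming $x$ and $y$ weakly have the same signs.

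Under that assumption, I unpack Definition \ref{D:Weakly have the same phase} with $\theta=1$. For each coordinate $j$, there are two possibilities. If both $a_j\ne 0$ and $b_j\ne 0$, then $\mathrm{sgn}(a_j)=\mathrm{sgn}(b_j)$, so the product $a_jb_j$ is strictly positive. If instead at least one of $a_j,b_j$ equals zero, then $a_jb_j=0$. In either case, $a_jb_j\ge 0$ for every index $j$.

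Now I sum over $j$:
\[
\langle x,y\rangle \;=\; \sum_{j=1}^n a_jb_j \;\ge\; a_ib_i \;>\; 0,
\]
where the last strict inequality uses the hypothesis that $a_ib_i\ne 0$ together with the fact that this term is non-negative, so it must in fact be positive. This contradicts the assumption $\langle x,y\rangle=0$, completing the argument for the same-signs case, and hence (by the initial reduction) for both cases.

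There is no real obstacle here; the only point that requires any care is the handling of coordinates where one of $a_j,b_j$ vanishes. The definition of weakly having the same phase only constrains indices where both entries are nonzero, so one might worry that zero coordinates could produce a negative contribution and allow the inner product to vanish. However, such coordinates contribute exactly $0$ to the sum, so they are harmless. The hypothesis $a_ib_i\ne 0$ is precisely what guarantees the existence of at least one strictly positive term, which is the whole reason the theorem requires a common nonzero coordinate.
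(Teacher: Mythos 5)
Your proof is correct: the observation that weakly having the same signs forces every coordinate product $a_jb_j$ to be nonnegative, with the hypothesis $a_ib_i\ne 0$ supplying one strictly positive term, immediately contradicts $\langle x,y\rangle=0$, and the reduction of the opposite-signs case via $y\mapsto -y$ is valid. The paper states Theorem \ref{T70} without proof (it is imported from \cite{CAR2021}), and your argument is the standard one for this result, so there is nothing further to compare.
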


Now we give a surprising and very strong classification of weak phase retrieval. Theorem 13 in \cite{CAR2021}
is false. The following is what the result should have stated.

\begin{theorem}\label{T33}
Let $\{x_i\}_{i=1}^{m}$ be non-zero vectors in $\RR^n$. The following are equivalent:
\begin{enumerate}
\item The family $\{x_i\}_{i=1}^{m}$ does weak phase retrieval in $\RR^n$. 
\item If $x,y\in \RR^n$ and
\begin{equation}\label{E8} |\langle x,x_i\rangle|=|\langle y,x_i\rangle|\mbox{ for all } i=1,2,\ldots,m,
\end{equation}
then there is some $a\not= 0$ and a partition $\{I_j\}_{j=1}^5$ of $[1,n]$ (where any $I_j$ may
be empty) and 
\begin{enumerate}
\item For $i\in I_1$, $x(i)\not= 0$ and $y(i)=0$.
\item For $i\in I_2$, $x(i)=0$ and $y(i)\not= 0$.
\item For $i\in I_3$, $x(i)=0=y(i)$.
\item For $I\in I_4$, $x(i)=ay(i)$.
\item For $x\in I_5$, $x(i)=\frac{1}{a}y(i)$.
\end{enumerate}
\end{enumerate}
\end{theorem}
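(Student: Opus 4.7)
My plan is to prove the easier direction (2) $\Rightarrow$ (1) directly from the form of the partition, and to prove (1) $\Rightarrow$ (2) by converting the modulus hypothesis $|\langle x,x_i\rangle|=|\langle y,x_i\rangle|$ into orthogonality statements about $x+y$ and $x-y$, then applying Theorem \ref{T52} to their normalizations.

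For (2) $\Rightarrow$ (1), suppose $|\langle x,x_i\rangle|=|\langle y,x_i\rangle|$ for all $i$ and that the described partition exists. On $I_1,I_2,I_3$ at least one of $x(i),y(i)$ vanishes, so these coordinates place no constraint on the phase. On $I_4\cup I_5$ the ratio $x(i)/y(i)$ is either $a$ or $1/a$, so whenever both $x(i)$ and $y(i)$ are nonzero we have $sgn(x(i))=sgn(a)\,sgn(y(i))$. Hence $x$ and $y$ weakly have the same phase with $\theta=sgn(a)$, which is exactly weak phase retrieval.

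For (1) $\Rightarrow$ (2), I would begin from the identity
\[\langle x+y,x_i\rangle\,\langle x-y,x_i\rangle=\langle x,x_i\rangle^2-\langle y,x_i\rangle^2=0,\]
which holds at each $i$ by hypothesis. Set $I=\{i:\langle x+y,x_i\rangle=0\}$; then $x+y\perp span\{x_i\}_{i\in I}$ and $x-y\perp span\{x_i\}_{i\in I^c}$. The degenerate subcases $x+y=0$ (take $a=-1$) and $x-y=0$ (take $a=1$), with $I_4=\{i:x(i)\ne 0\}$, $I_3=\{i:x(i)=0\}$, and the other $I_j$ empty, are handled directly. Otherwise let $s=\|x+y\|$ and $t=\|x-y\|$, normalize $u=(x+y)/s$, $v=(x-y)/t$, and apply Theorem \ref{T52} to conclude that $u+v$ and $u-v$ are disjointly supported.

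To extract the partition, expand
\[u+v=\frac{(s+t)x+(t-s)y}{st},\qquad u-v=\frac{(t-s)x+(s+t)y}{st}.\]
If $s=t$ these collapse to scalar multiples of $x$ and $y$, so disjoint support of $u\pm v$ forces $x,y$ to be disjointly supported and every index falls into $I_1$, $I_2$, or $I_3$. If $s\ne t$, I would perform a coordinate-by-coordinate case analysis: when both $(u+v)(i)$ and $(u-v)(i)$ vanish, the $2\times 2$ system has determinant $(s+t)^2-(t-s)^2=4st>0$, forcing $x(i)=y(i)=0$ (assign to $I_3$); when only $(u-v)(i)=0$, solving gives $x(i)=a\,y(i)$ with $a:=(s+t)/(s-t)$ (assign to $I_4$); and when only $(u+v)(i)=0$, solving gives $x(i)=y(i)/a$ (assign to $I_5$). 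Because $s+t>0$ and $s-t\ne 0$, no coordinate with exactly one of $x(i),y(i)$ nonzero can occur, so $I_1=I_2=\emptyset$ in this branch. I expect the main obstacle to be the bookkeeping in this coordinate-level case analysis and verifying that the single scalar $a$ defined from the global quantities $s,t$ works simultaneously at every index in $I_4$ (and its reciprocal at every index in $I_5$); this is automatic from the explicit formulas, but must be stated carefully across the degenerate branches.
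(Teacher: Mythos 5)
Your proposal is correct and follows essentially the same route as the paper's proof: both convert the modulus hypothesis into orthogonality of $x+y$ and $x-y$ to complementary subfamilies, apply Theorem \ref{T52} to the normalized vectors $\frac{x+y}{\|x+y\|},\frac{x-y}{\|x-y\|}$, and read off the partition coordinate by coordinate with $a$ determined by $\|x+y\|$ and $\|x-y\|$ (your $a$ is the reciprocal of the paper's, which only swaps $I_4$ and $I_5$). The single minor divergence is that in the case $\|x+y\|=\|x-y\|$ the paper invokes Theorem \ref{T70} to get $x(j)y(j)=0$ for all $j$, whereas you note that $u+v$ and $u-v$ collapse to multiples of $x$ and $y$, reaching the same conclusion; your treatment of $(2)\Rightarrow(1)$ and of the degenerate cases $x=\pm y$ is also more explicit than the paper's.
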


\begin{proof} $(2)\Rightarrow (1)$: Given (2), it is clear that x,y weakly have the same phase.
 \vskip7pt
 $(1)\Rightarrow (2)$: Assume $|\langle x,x_i\rangle|=|\langle y,x_i\rangle|$ for all
 $1\le i \le m$. We will examine cases:
 \vskip7pt
 \noindent {\bf Case 1}: $x=\pm y$. In this case, let $I_1,I_2$ be empty sets and a=1 and (2) of
 the theorem holds.

 \vskip7pt
 
 \noindent {\bf Case 2}: $\|x+y\|=\|x-y\|$. In this case,
\[
 \|x+y\|^2= \|x\|^2+\|y\|^2+2\langle x,y\rangle
 = \|x-y\|^2
 = \|x\|^2+\|y\|^2-2\langle x,y\rangle.
\]
It follows that $\langle x,y\rangle =0$. Since $\{x_i\}_{i=1}^m$ does weak phase retrieval, by Theorem \ref{T70},
$x(j)y(j)=0$ for all $j=1,2,\ldots,m$. So letting $I_4,I_5$ be empty sets, (2) of the theorem holds in this case. 
 \vskip7pt
 
 \noindent {\bf Case 3}: $\|x+y\|\not= 0 \not= \|x-y\|$ and 
 \[ \frac{1}{\|x+y\|}-\frac{1}{\|x-y\|} \not= 0.\]
 Now define $\{I_j\}_{j=1}^3$ as in (2) of the theorem, and define:
 \[ I=\{1\le i\le m:\langle x,x_i\rangle =\langle y,x_i\rangle\}\mbox{ so }I^c=\{1\le i\le m:\langle x,x_i\rangle = -\langle y,x_i\rangle\}.\]
 That is:
 \[ x-y\perp x_i\mbox{ for all }i\in I \mbox{ and }x+y\perp x_i\mbox{ for all }i\in I^c.\]
 By Theorem \ref{T52},
 \[ \frac{x+y}{\|x+y\|}+\frac{x-y}{\|x-y\|}\mbox{ and } \frac{x+y}{\|x+y\|}-\frac{x-y}{\|x-y\|}\mbox{ are disjointly supported}.\]
So for every $i=1,2,\ldots,2n-2$,
\[ \mbox{either }(\frac{x+y}{\|x+y\|}+\frac{x-y}{\|x-y\|})(i)=0\mbox{ or } (\frac{x+y}{\|x+y\|}-\frac{x-y}{\|x-y\|})(i)=0.\]
If
\[ (\frac{x+y}{\|x+y\|}+\frac{x-y}{\|x-y\|})(i)=0
\]
then
\[ \left ( \frac{1}{\|x+y\|}+\frac{1}{\|x-y\|}\right )x(i)=\left ( \frac{-1}{\|x+y\|}+\frac{1}{\|x-y\|}\right )y(i)
\]
Let
\[ a=\frac{\left ( \frac{-1}{\|x+y\|}+\frac{1}{\|x-y\|}\right )}{\left ( \frac{1}{\|x+y\|}+\frac{1}{\|x-y\|}\right )}.\]
Then $x(i)=ay(i)$ and by assumption, $a\not= 0$. In the second case:
\[ (\frac{x+y}{\|x+y\|}-\frac{x-y}{\|x-y\|})(i)=0,\]
and so
\[ \left ( \frac{1}{\|x+y\|}-\frac{1}{\|x-y\|}\right )x(i)=-\left ( \frac{1}{\|x+y\|}+\frac{1}{\|x-y\|}\right )y(i)
\]
So
\[ x(i) = \frac{-\left ( \frac{1}{\|x+y\|}+\frac{1}{\|x-y\|}\right )}{\left ( \frac{1}{\|x+y\|}-\frac{1}{\|x-y\|}\right )}y(i)=\frac{1}{a}y(i).\]
This proves (2)

\end{proof}

\begin{remark}
The above theorem shows exactly how weak phase retrieval and phase retrieval differ. In particular,
$\{x_i\}_{i=1}^m$ does phase retrieval if and only if whenever $|\langle x,x_i\rangle|=|\langle y,x_i\rangle|$,
for all $i=1,2,\ldots,m$ then $I_1,I_2$ are empty sets and $a=\pm 1$.
\end{remark}

We will look at some examples for the theorem. Let $\{x_i\}_{i=1}^4$ be the row vectors of the matrix in
Remark \ref{s18} which does weak phase retrieval. Let $x=(1,0,0),\ y=(0,1,0$. Then
\[ |\langle x,x_i\rangle|=|\langle y,x_i\rangle|,\mbox{ for all }i=1,2,\ldots,m.\]
Letting 
\[ I_1=\{1\}\ \ I_2=\{2\}\ \ I_3=\{3\} \ \ \mbox{ and }I_4,I_5\mbox{ are empty sets},\]
(2) of the theorem holds.

If $x=(2,3,0),\ y=(3,2,0)$ then $|\langle x,x_i\rangle|=\langle y,x_i\rangle|$, for
all $i=1,2,3,4.$ We obtain (2) of the theorem by letting $\{I_j\}_{j=1}^2$ be empty sets, $I_3=\{3\}$, $I_4=\{1\}$ 
and $I_5=\{2\}$and
$a=\frac{2}{3}$. Then
\[ x(1)=2=\frac{2}{3}3=ay(1)\mbox{ and }x(2)=3=\frac{3}{2}2=\frac{1}{a}y(2),\]
and(2) of the theorem holds.

\begin{definition}
Let $\{e_i\}_{i=1}^n$ be the canonical orthonormal basis of $\RR^n$. If $J\subset [n]$, we define
$P_J$ as the projection onto span$\{e_i\}_{i\in J}$.
\end{definition}

\begin{theorem}
Let $\{x_i\}_{i=1}^{m}$ be unit vectors in $\RR^n$. The following are equivalent:
\begin{enumerate}
\item Whenever $I\subset [2n-2]$ and $0\not= x\perp x_i$ for $i\in I$ and $0\not= y\perp x_i$ for $i\in I^c$,
there is no $j\in [n]$ so that $\langle x,e_j\rangle = 0 = \langle y,e_j\rangle$.
\item For every $J\subset [n]$ with $|J|=n-1$, $\{P_jx_i\}_{i=1}^{2n-2}$ does phase retrieval.
\item For every $J\subset [n]$ with $|J|<n$, $\{P_Jx_i\}_{i=1}^{2n-2}$ does phase retrieval.
\end{enumerate}
\end{theorem}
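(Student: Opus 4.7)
The strategy is to translate each of (2) and (3) into a complement-property statement via Theorem \ref{T:p34}, and then observe that the resulting condition matches (1) after a one-line computation. The single calculation powering everything is
\[\langle x,P_Jx_i\rangle \;=\; \langle x,x_i\rangle \qquad \text{whenever } x\in \text{span}\{e_j:j\in J\},\]
which lets orthogonality to the projected frame be read as orthogonality to the original frame without losing any information.

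I would first dispatch $(2)\Leftrightarrow(3)$. The direction $(3)\Rightarrow(2)$ is immediate, since $|J|=n-1$ satisfies $|J|<n$. For $(2)\Rightarrow(3)$, take any $J\subset[n]$ with $|J|<n$, and enlarge it to some $J'$ with $J\subset J'\subset [n]$ and $|J'|=n-1$. For $x,y\in\text{span}\{e_j:j\in J\}\subset \text{span}\{e_j:j\in J'\}$ the identity above gives $\langle x,P_Jx_i\rangle=\langle x,P_{J'}x_i\rangle$ for every $i$, so magnitude equality against $P_Jx_i$ lifts to magnitude equality against $P_{J'}x_i$, and (2) applied to $J'$ forces $x=\pm y$.

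For $(1)\Leftrightarrow(2)$ I would argue contrapositively. Failure of (2) yields some $J$ with $|J|=n-1$ and $J^c=\{j_0\}$ for which $\{P_Jx_i\}_{i=1}^{2n-2}$ lacks the complement property on $\text{span}\{e_j:j\in J\}$: there exist $I\subset[2n-2]$ and non-zero $x,y\in \text{span}\{e_j:j\in J\}$ with $x\perp P_Jx_i$ for $i\in I$ and $y\perp P_Jx_i$ for $i\in I^c$. The identity converts these into $x\perp x_i$ for $i\in I$ and $y\perp x_i$ for $i\in I^c$, while the subspace constraint $x,y\in\text{span}\{e_j:j\in J\}$ reads exactly as $\langle x,e_{j_0}\rangle=0=\langle y,e_{j_0}\rangle$, giving a witness to the failure of (1). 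The reverse implication is read off directly: any violation of (1) with common zero at coordinate $j$ yields, upon setting $J=[n]\setminus\{j\}$, non-zero vectors in $\text{span}\{e_i:i\in J\}$ that break the complement property for $\{P_Jx_i\}$, hence break phase retrieval by Theorem \ref{T:p34}.

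No step is genuinely hard, but the argument requires discipline about which subspace each vector lives in: the "common zero" clause in (1) is precisely what encodes membership in the coordinate hyperplane $\text{span}\{e_j:j\neq j_0\}$, and losing sight of that correspondence is the only way the bookkeeping can go astray.
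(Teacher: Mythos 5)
Your proof is correct and follows essentially the same route as the paper: both arguments reduce (2) and (3) to the complement property via Theorem \ref{T:p34}, use the identity $\langle x,P_Jx_i\rangle=\langle x,x_i\rangle$ for $x$ in the range of $P_J$, and identify the ``common zero at coordinate $j$'' clause of (1) with membership in the coordinate hyperplane $\mathrm{span}\{e_i\}_{i\neq j}$. The only difference is cosmetic: you prove $(2)\Rightarrow(3)$ explicitly by enlarging $J$ to a set $J'$ of size $n-1$, where the paper simply cites the general fact that coordinate projections preserve phase retrieval, and you organize the equivalences as two biconditionals rather than the paper's cycle $(1)\Rightarrow(2)\Rightarrow(3)\Rightarrow(1)$.
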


\begin{proof}
$(1)\Rightarrow (2)$: We prove the contrapositive. So assume (2) fails. Then choose $J\subset [n]$
with $|J|=n-1$, $J=[n]\setminus \{j\}$, and $\{P_Jx_i\}_{i=1}^{2n-2}$ fails phase retrieval. In particular, it fails complement property.
That is, there exists $I\subset [2n-2]$ and span $\{P_Jx_i\}_{i\in I}\not= P_J\RR^n$ and span $\{P_jx_i\}_{i\in I^c}
\not= P_J\RR^n$. So there exists norm one vectors $x,y$ in $P_J\RR^n$ with $P_Jx=x\perp P_Jx_i$ for all $i\in I$
and $P_Jy=y\perp P_Jx_i$ for all $i\in I^c$. Extend $x,y$ to all of $\RR^n$ by setting
$x(j)=y(j)=0$. Hence, $x\perp x_i$ for $i\in I$ and $y\perp x_i$ for $i\in I^c$, proving (1) fails.
\vskip7pt
$(2)\Rightarrow (3)$: This follows from the fact that every projection of a set of vectors doing phase retrieval
onto a subset of the basis also does phase retrieval.
\vskip7pt
$(3)\Rightarrow (2)$: This is obvious.
\vskip7pt
$(3)\Rightarrow (1)$: We prove the contrapositive. So assume (1) fails. Then there is a 
$I\subset [2n-2]$ and $0\not= x\perp x_i$ for $i\in I$ and $0\not= y\perp x_i$ for $i\in I^c$ and a
$j\in [n]$ so that
$\langle x,e_j\rangle =\langle y,e_j\rangle=0$. 
It follows that $x=P_Jx,\ y=P_Jy$ are non zero and $x\perp P_jx_i$ for all $i\in I$ and $y\perp P_jx_i$ for $i\in I^c$,
so $\{P_Jx_i\}_{i=1}^{2n-2}$ fails phase retrieval.
\end{proof}

\begin{remark}
The assumptions in the theorem are necessary. That is, in general, $\{x_i\}_{i=1}^{m}$ can do weak phase retrieval
and $\{P_Jx_i\}_{i=1}^m$ may fail phase retrieval. For example, in $\RR^3$ consider the row vectors $\{x_i\}_{i=1}^4$ of:
\[\begin{bmatrix}
1&1&1\\
-1&1&1\\
1&-1&1\\
1&1&-1
\end{bmatrix}\]
This set does weak phase retrieval, but if $J=\{2,3\}$ then $x=(0,1,-1) \perp P_Jx_i$ for $i=1,2$ and
$y=(0,1,1)\perp x_i$ for $i=3,4$ and $\{P_Jx_i\}_{i=1}^4$ fails phase retrieval.
\end{remark}

\begin{corollary}
Assume $\{x_i\}_{i=1}^{2n-2}$ does weak phase retrieval in $\RR^n$ and for every $J\subset [n]$
$\{P_Jx_i\}_{i=1}^{2n-2}$ does phase retrieval. Then if $x,y\in \RR^n$ and
\[|\langle x,x_i\rangle|=|\langle y,x_i\rangle|\mbox{ for all }i=1,2,\ldots,2n-2,\]
then there is a $J\subset [n]$ so that 
\[ x(j) = \begin{cases} a_j\not= 0\ \ for\ j\in J\\
0\ \ for \ j\in J^c
\end{cases}
 y(j) = \begin{cases}  0\ \ for\ j\in J\\
b_j\not= 0\ \ for \ j\in J^c
\end{cases}
\]
\end{corollary}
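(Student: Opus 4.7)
The plan is to apply Theorem~\ref{T33} to $(x,y)$, producing a partition $[n]=I_1\sqcup I_2\sqcup I_3\sqcup I_4\sqcup I_5$ and a scalar $a\ne 0$, and then to use the projection-phase-retrieval hypothesis to force $I_3=I_4=I_5=\emptyset$; the desired set will be $J=I_1$, with $a_j=x(j)$ for $j\in J$ and $b_j=y(j)$ for $j\in J^c=I_2$. The degenerate possibility $x=y=0$ is handled by $J=\emptyset$, and since the asserted conclusion requires disjoint supports I may assume $x\ne\pm y$ throughout.

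To show $I_3=\emptyset$, I would argue by contradiction. If some $j$ satisfies $x(j)=y(j)=0$, let $K=[n]\setminus\{j\}\subsetneq[n]$; then $x,y\in P_K\RR^n$, so $\langle x,x_i\rangle=\langle P_Kx,P_Kx_i\rangle$ and similarly for $y$. The identity $|\langle x,x_i\rangle|=|\langle y,x_i\rangle|$ therefore transfers to $|\langle P_Kx,P_Kx_i\rangle|=|\langle P_Ky,P_Kx_i\rangle|$ for every $i$, and the phase-retrieval hypothesis on $\{P_Kx_i\}_{i=1}^{2n-2}$ forces $P_Kx=\pm P_Ky$; since $x,y$ are both supported in $K$, this gives $x=\pm y$, contradicting our standing assumption.

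To rule out $I_4\cup I_5\ne\emptyset$, I would examine the three cases in the proof of Theorem~\ref{T33}. Case~1 ($x=\pm y$) has been excluded; in Case~2 ($\langle x,y\rangle=0$), the proof of Theorem~\ref{T33} combined with Theorem~\ref{T70} directly yields $I_4=I_5=\emptyset$. Only Case~3 remains, in which $x,y$ have full support with $x=ay$ on $I_4$ and $x=y/a$ on $I_5$ and $|a|\ne 1$. Expanding $|\langle x,x_i\rangle|^2=|\langle y,x_i\rangle|^2$ and writing $A_i=\langle P_{I_4}y,P_{I_4}x_i\rangle$ and $B_i=\langle P_{I_5}y,P_{I_5}x_i\rangle$ one extracts the scalar identity $|aA_i|=|B_i|$ for every $i$, and the plan is to combine this with the proportionality $P_{I_4}x=aP_{I_4}y$ and apply the projection-phase-retrieval hypothesis for $J=I_4$ to a carefully chosen auxiliary pair in $P_{I_4}\RR^n$, forcing $|a|=1$.

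The hard part will be exactly this Case~3 contradiction: phase retrieval compares vectors in the same space with equal absolute inner products, but the direct consequences of our hypothesis only yield proportionality with $|a|\ne 1$ together with a cross-subspace identity linking $P_{I_4}\RR^n$ with $P_{I_5}\RR^n$. Converting this into a genuine phase-retrieval failure for $\{P_{I_4}x_i\}_{i=1}^{2n-2}$ is the technical heart of the argument, and is where the full strength of the projection hypothesis (phase retrieval on every proper coordinate projection, not merely norm preservation) must be used. Once Case~3 is excluded, the partition reduces to $[n]=I_1\sqcup I_2$ and $J=I_1$ yields the claimed form.
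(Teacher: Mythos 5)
The paper states this corollary without any proof, so there is nothing to compare your route against; your proposal has to stand on its own, and it does not: the step you yourself flag as ``the technical heart'' --- excluding Case~3 of Theorem~\ref{T33}, i.e.\ forcing $|a|=1$ from the projection hypothesis --- is left entirely open, so this is a plan rather than a proof. Your handling of Case~1, Case~2 (via Theorem~\ref{T70}) and of $I_3=\emptyset$ is fine, and your extracted identity $|aA_i|=|B_i|$ is correct, but that identity relates $P_{I_4}y$ to $P_{I_5}y$, vectors living in \emph{different} coordinate subspaces; it does not produce two distinct vectors in $P_{I_4}\RR^n$ with equal absolute frame coefficients against $\{P_{I_4}x_i\}$, so there is no phase-retrieval failure to invoke.

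More seriously, the missing step cannot be supplied, because the statement (read with the only sensible interpretations: $J\subsetneq[n]$ in the hypothesis, and $x\neq\pm y$ to avoid trivial failure) is false. Take $n=2$ and $x_1=\frac{1}{\sqrt2}(1,1)$, $x_2=\frac{1}{\sqrt2}(1,-1)$: the paper itself records that this pair does weak phase retrieval in $\RR^2$, and each one-coordinate projection is a nonzero vector, hence does phase retrieval in $\RR^1$. Yet $x=(2,3)$, $y=(3,2)$ satisfy $|\langle x,x_i\rangle|=|\langle y,x_i\rangle|$ for $i=1,2$ while both have full support, so no set $J$ of the asserted form exists. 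Structurally, writing $u=x-y$, $v=x+y$, Theorem~\ref{T52} gives $\|v\|\,|u(j)|=\|u\|\,|v(j)|$ for all $j$, and the projection hypothesis (via condition (1) of the preceding theorem) only rules out common zeros of $u$ and $v$; the conclusion of the corollary then holds precisely when $\|u\|=\|v\|$, i.e.\ $\langle x,y\rangle=0$, and nothing in the hypotheses forces this orthogonality. When $\langle x,y\rangle\neq0$ one lands exactly in your Case~3 with $I_4\cup I_5=[n]$, which is a genuine possibility, not a contradiction. Any correct version of this corollary must either add $\langle x,y\rangle=0$ as a hypothesis or weaken the conclusion to the full five-set dichotomy of Theorem~\ref{T33} with $I_3=\emptyset$.
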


\begin{proposition}\label{P3}
Let $\{e_i\}_{i=1}^n$ be the unit vector basis of $\RR^n$ and for $I\subset [n]$, let $P_I$ be
the projection onto $X_I=span\{e_i\}_{i\in I}$.
 For every $m\ge 1$, there are vectors $\{x_i\}_{i=1}^m$ so
that for every $I\subset [1,n]$, $\{P_Ix_i\}_{i=1}^m$ is full spark in $X_I$.
\end{proposition}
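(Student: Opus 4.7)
The plan is to give an explicit construction via the moment curve. I would choose distinct positive reals $t_1 < t_2 < \cdots < t_m$ (for concreteness $t_i = i$ works) and set
\[ x_i = (1, t_i, t_i^2, \ldots, t_i^{n-1}) \in \RR^n \]
for $i = 1, 2, \ldots, m$, and then argue that this single deterministic family simultaneously achieves full spark after every coordinate projection.

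First I would reduce the statement to its non-trivial content. For $I \subset [n]$ with $|I| = k$, full spark of $\{P_I x_i\}_{i=1}^m$ in $X_I$ is vacuous when $m < k$, and otherwise is equivalent to the requirement that for every $J \subset [m]$ with $|J| = k$ the projected vectors $\{P_I x_j\}_{j \in J}$ are linearly independent in $X_I$. Writing $I = \{i_1 < \cdots < i_k\}$ and $J = \{j_1 < \cdots < j_k\}$, the expansion of $P_I x_{j_l}$ in the basis $\{e_{i_s}\}_{s=1}^k$ of $X_I$ is $\bigl(t_{j_l}^{i_1 - 1}, \ldots, t_{j_l}^{i_k - 1}\bigr)$, so linear independence becomes non-vanishing of the generalized Vandermonde determinant
\[ \det\bigl(t_{j_l}^{i_s - 1}\bigr)_{1 \le l, s \le k}. \]

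The main step is to invoke the classical positivity theorem for generalized Vandermonde matrices: for distinct positive reals $s_1 < \cdots < s_k$ and distinct non-negative integer exponents $a_1 < \cdots < a_k$, the matrix $(s_l^{a_s})$ has strictly positive determinant. (This can be proved via Descartes' rule of signs applied to the polynomial $\sum_s c_s z^{a_s}$, or via general total positivity of the kernel $(s, a) \mapsto s^a$.) Applied uniformly to every pair $(I, J)$ it yields the proposition. The only real obstacle is citing this Vandermonde-type positivity correctly; a more abstract alternative avoids it entirely, by identifying a choice of $\{x_i\}_{i=1}^m$ with a point of $\RR^{mn}$, observing that failure of independence for each fixed $(I, J)$ is the vanishing of a polynomial in these coordinates that is not identically zero (witnessed, say, by choosing the $x_j$ for $j \in J$ to be the basis vectors $e_{i_1}, \ldots, e_{i_k}$), and concluding that the bad locus is a finite union of proper algebraic subvarieties of $\RR^{mn}$, hence of Lebesgue measure zero, so the good set is dense and in particular non-empty. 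I prefer the explicit moment-curve version because it produces a single family that handles all $(I, J)$ at once without any measure-theoretic overhead.
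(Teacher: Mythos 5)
Your proof is correct, but it takes a genuinely different route from the paper's. The paper argues by induction on $m$: starting from $x_1=(1,1,\ldots,1)$, it adds one vector at a time, choosing $x_{m+1}$ outside a finite union of proper subspaces built from the already-chosen vectors and the coordinate subspaces; this is a non-constructive avoidance argument (and, as written, the inductive step is rather tangled in its indexing). You instead exhibit a single explicit family, the moment curve $x_i=(1,t_i,\ldots,t_i^{n-1})$ with distinct $t_i>0$, and reduce every instance of the claim to the non-vanishing of a generalized Vandermonde determinant $\det\bigl(t_{j_l}^{\,i_s-1}\bigr)$, which follows from Descartes' rule of signs (a nontrivial combination $\sum_s c_s z^{i_s-1}$ has at most $k-1$ positive roots, so it cannot vanish at $k$ distinct positive nodes); note that only non-vanishing, not positivity, is needed, so the Descartes argument alone closes the gap you flag about ``citing the positivity theorem correctly.'' Your construction also transparently satisfies the paper's follow-up remark that no $x_i$ may have a zero coordinate, and it covers the degenerate case $m<|I|$ since every square minor of the projected matrix is a generalized Vandermonde determinant. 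What your approach buys is an explicit, verifiable family handling all pairs $(I,J)$ at once; what the paper's buys is a proof using only elementary linear algebra with no appeal to Vandermonde-type facts. Your alternative sketch (the bad locus is a finite union of proper algebraic subvarieties of $\RR^{mn}$) is essentially the paper's genericity idea repackaged, and is also valid provided you supply, as you do, a witness showing each polynomial is not identically zero.
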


\begin{proof}
We do this by induction on m. For m=1, let $x_1=(1,1,1,\ldots,1)$. This satisfies the theorem. So assume
the theorem holds for $\{x_i\}_{i=1}^m$. Choose $I\subset [1,n]$ with $|I|=k$. Choose $J\subset I$ with
$|J|=k-1$ and let $X_J=span\{x_i\}_{i\in J}\cup \{x_i\}_{i\in I^c}$. Then $X_J$ is a hyperplane in $\RR^n$ for
every $J$. Since there only exist finitely many such $J's$ there is a vector $x_{m+1}\notin X_J$ for every $J$.
We will show that $\{x_i\}_{i=1}^{m+1}$ satisfies the theorem.

Let $I\subset [1,n]$ and $J\subset I$ with $|J|=|I|$. If $P_Ix_{m+1} \notin X_J$, then $\{P_Ix_i\}_{i\in J}$ is linearly
independent by the induction hypothesis. On the other hand, if $m+1\in J$ then $x_{m+1}\notin X_J$. But,
if $P_Ix_{m+1}\in span \{P_Ix_i\}_{i\in J\setminus m+1}$, since $(I-P_I)x_{m+1}\in span \{e_i\}_{i\in I^c}$, it follows
that $x_{m+1}\in X_J$, which is a contradiction.
\end{proof}
\begin{remark}
In the above proposition, none of the $x_i$ can have a zero coordinate. Since if it does, projecting the vectors
onto that coordinate produces a zero vector and so is not full spark.
\end{remark}

\end{document}